\theoremstyle{plain}
\newtheorem{theorem}{Theorem}[section]
\newtheorem{lemma}[theorem]{Lemma}
\newtheorem{proposition}[theorem]{Proposition}
\newtheorem{corollary}[theorem]{Corollary}
\theoremstyle{definition}
\newtheorem{question}[theorem]{Question}
\newcommand{\wc}{\!\!\downarrow\,}
\newcommand{\bS}{\mathbf{\Sigma}}
\newcommand{\bP}{\mathbf{\Pi}}
\newcommand{\bD}{\mathbf{\Delta}}
\newcommand{\bG}{\mathbf{\Gamma}}
\newcommand{\bGc}{\widecheck{\mathbf{\Gamma}}}
\newcommand{\Borel}{\mathsf{Borel}}
\newcommand{\VL}{\mathsf{V=L}}
\newcommand{\ZFC}{\mathsf{ZFC}}
\newcommand{\ZF}{\mathsf{ZF}}
\newcommand{\DC}{\mathsf{DC}}
\newcommand{\AD}{\mathsf{AD}}
\newcommand{\PP}{\mathcal{P}}
\newcommand{\UU}{\mathcal{U}}
\newcommand{\VV}{\mathcal{V}}
\newcommand{\RRR}{\mathbb{R}}
\newcommand{\QQQ}{\mathbb{Q}}
\begin{document}

\title{Every finite-dimensional analytic space is $\sigma$-homogeneous}

\author{Claudio Agostini}
\address{Institut f\"{u}r Diskrete Mathematik und Geometrie
\newline\indent Technische Universit\"{a}t Wien
\newline\indent  Wiedner Hauptstra\ss e 8–-10/104
\newline\indent 1040 Vienna, Austria}
\email{claudio.agostini@tuwien.ac.at}
\urladdr{http://www.dmg.tuwien.ac.at/agostini/}

\author{Andrea Medini}
\address{Institut f\"{u}r Diskrete Mathematik und Geometrie
\newline\indent Technische Universit\"{a}t Wien
\newline\indent  Wiedner Hauptstra\ss e 8–-10/104
\newline\indent 1040 Vienna, Austria}
\email{andrea.medini@tuwien.ac.at}
\urladdr{http://www.dmg.tuwien.ac.at/medini/}

\subjclass[2020]{54H05, 03E15.}

\keywords{Homogeneous, zero-dimensional, finite-dimensional, analytic.}

\thanks{The first-listed author was supported by the FWF grant P 35655. The second-listed author was supported by the FWF grant P 35588. The authors are grateful to Zolt\'{a}n Vidny\'{a}nszky for useful discussions, in which he ``planted the seeds'' for the results of Section \ref{section_main}.}

\date{March 19, 2024}

\begin{abstract}
All spaces are assumed to be separable and metrizable. Building on work of van Engelen, Harrington, Michalewski and Ostrovsky, we obtain the following results:
\begin{itemize}
\item Every finite-dimensional analytic space is $\sigma$-homogeneous with analytic witnesses,
\item Every finite-dimensional analytic space is $\sigma$-homogeneous with pairwise disjoint $\bD^1_2$ witnesses.
\end{itemize}
Furthermore, the complexity of the witnesses is optimal in both of the above results. This completes the picture regarding $\sigma$-homogeneity in the finite-dimensional realm. It is an open problem whether every analytic space is $\sigma$-homogeneous. We also investigate finite unions of homogeneous spaces.
\end{abstract}

\maketitle

\tableofcontents

\section{Introduction}

Throughout this article, unless we specify otherwise, we will be working in the theory $\ZF+\DC$, that is, the usual axioms of Zermelo-Fraenkel (without the Axiom of Choice) plus the principle of Dependent Choices (see \cite[Section 2]{carroy_medini_muller_constructing} for more details and references). By \emph{space} we will always mean separable metrizable topological space.

A space $X$ is \emph{homogeneous} if for every $(x,y)\in X\times X$ there exists a homeomorphism $h:X\longrightarrow X$ such that $h(x)=y$. For example, using translations, it is easy to see that every topological group is homogeneous (as \cite[Corollary 3.6.6]{van_engelen_thesis} shows, the converse is not true, not even for zero-dimensional Borel spaces). Homogeneity is a classical notion in topology, which has been studied in depth (see for example the survey \cite{arhangelskii_van_mill_survey}).

Here, we will focus on a much less studied notion.\footnote{\,See however \cite{arhangelskii_van_mill_homogeneous} and \cite{arhangelskii_van_mill_groups}, \cite{van_engelen_van_mill} and \cite{van_mill_homogeneous}, where somewhat related questions are investigated.} We will say that a space $X$ is \emph{$\sigma$-homogeneous} if there exist homogeneous subspaces $X_n$ of $X$ for $n\in\omega$ such that $X=\bigcup_{n\in\omega}X_n$. When each $X_n$ is closed in $X$ (respectively $X_n\in\bS^1_1(X)$ or $X_n\in\bP^1_1(X)$), we will say that $X$ is \emph{$\sigma$-homogeneous with closed witnesses} (respectively \emph{with analytic witnesses} or \emph{with coanalytic witnesses}). Notice that the complexity of the witnesses is always relative to $X$, as opposed to absolute (see Section \ref{section_preliminaries} for the definitions). Further instances of $\sigma$-homogeneity with special witnesses will appear throughout the paper, but their intended meaning seems clear enough not to warrant explicit definitions.

The following is the result that sparked our interest in this topic (see \cite{ostrovsky_sigma}), while the subsequent three theorems are the main results of \cite{medini_vidnyanszky}. Notice that the complexity ``closed'' is optimal, as $\omega+1$ gives a trivial example of a space that is not $\sigma$-homogeneous with open witnesses.

\begin{theorem}[Ostrovsky]\label{theorem_ostrovsky}
Every zero-dimensional Borel space is $\sigma$-homogeneous with pairwise disjoint closed witnesses.
\end{theorem}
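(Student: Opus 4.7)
My plan is to proceed by transfinite induction on the Wadge rank of $X$, after embedding $X$ as a Borel subset of the Cantor space $2^\omega$; such an embedding exists because every zero-dimensional separable metrizable Borel space is homeomorphic to a Borel subset of $2^\omega$. The base case is when $X$ has the smallest possible complexity in the hierarchy (for instance, $X$ clopen in $2^\omega$, hence either empty or homeomorphic to $2^\omega$), in which case $X$ is itself homogeneous and serves as its own witness.

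The crux of the argument is an \emph{extraction lemma}: for any nonempty zero-dimensional Borel $Y \subseteq 2^\omega$ there exists a nonempty closed-in-$Y$ homogeneous subspace $H \subseteq Y$ such that $Y \setminus H$ decomposes as a countable union of closed-in-$Y$ subspaces, each of strictly smaller rank than $Y$. To produce $H$, I would invoke van Engelen's classification of homogeneous zero-dimensional Borel spaces: for each non-self-dual Borel Wadge class $\bG$ there is, up to homeomorphism, a unique zero-dimensional homogeneous Borel space whose local Wadge type is $\bG$ everywhere. The strategy is to identify a maximal local Wadge type realized in $Y$, restrict to a clopen-in-$Y$ piece where this maximal type is uniformly attained, and take $H$ to be the closure of this piece in $Y$. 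Van Engelen's uniqueness then forces $H$ to be homogeneous, while the residue is covered by closed-in-$Y$ sets of strictly lower local types.

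Granted the extraction lemma, the theorem follows by a routine recursion: apply extraction to obtain $H_0 \subseteq X$, decompose $X \setminus H_0$ into countably many lower-rank closed-in-$X$ pieces $C_n$ (disjointifying via $C_n \setminus (C_0 \cup \cdots \cup C_{n-1})$ and further decomposing into closed subpieces if needed to preserve closedness), invoke the inductive hypothesis on each $C_n$, and combine all resulting pieces together with $H_0$. Pairwise disjointness is automatic from the construction, and closedness in $X$ is preserved because a closed subset of a closed subset is closed.

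The principal obstacle is the extraction lemma, whose proof relies on the full force of van Engelen's classification and of the structure theory of zero-dimensional absolute Borel sets. Making precise the notion of ``maximal local Wadge type'' for sets of intermediate complexity, and ensuring that the selected piece is genuinely homogeneous (not merely locally uniform) while its complement has strictly lower complexity, is the technical heart of the argument; the subsequent recursion is routine.
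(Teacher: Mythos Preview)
The paper does not prove this theorem: it is quoted, with attribution to Ostrovsky and a citation to \cite{ostrovsky_sigma}, as background for the paper's own results on analytic spaces. There is therefore no proof in the paper to compare your proposal against.

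As for the proposal itself, it is a plan rather than a proof, and you concede as much when you call the extraction lemma ``the technical heart of the argument'' and leave it unproved. Beyond that, there are concrete gaps in the outline. First, in the recursion step, the disjointification $C_n\setminus(C_0\cup\cdots\cup C_{n-1})$ does not preserve closedness; a difference of closed sets is merely $F_\sigma$, and once you ``further decompose into closed subpieces'' you have lost all control over their Wadge rank, so the induction need not progress. Second, in the extraction lemma the phrase ``take $H$ to be the closure of this piece in $Y$'' is puzzling (a clopen-in-$Y$ piece is already closed in $Y$), and more seriously there is no argument that $Y\setminus H$ --- an open, hence $F_\sigma$, subset of $Y$ --- decomposes into closed-in-$Y$ pieces of \emph{strictly} smaller rank than $Y$: the closed pieces of an $F_\sigma$ decomposition of an open set can easily have the same complexity as the ambient space. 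Third, van Engelen's classification gives one homogeneous model per Wadge class only after separating the meager and Baire cases and under a ``everywhere properly of class $\bG$'' hypothesis; it does not directly say that a clopen piece on which the local type is constant is homogeneous. The machinery you invoke is the right one, but turning it into an actual extraction lemma with the stated properties is essentially the whole content of Ostrovsky's argument, not a preliminary step.
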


\begin{theorem}[Medini, Vidny\'anszky]\label{theorem_determinacy}
Assume $\AD$. Then every zero-dimensional space is $\sigma$-homogeneous with pairwise disjoint closed witnesses.
\end{theorem}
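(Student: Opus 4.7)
My plan is to reduce to Ostrovsky's Theorem \ref{theorem_ostrovsky} by transfinite induction on the Wadge rank of $X$ inside a fixed embedding $X\subseteq 2^\omega$, which is available since $X$ is zero-dimensional, separable, and metrizable. Under $\AD$ the Wadge preorder on $\PP(2^\omega)$ is well-founded (Martin--Monk), so the induction is well-posed; the base case, namely $X$ of Borel Wadge rank, is precisely Theorem \ref{theorem_ostrovsky}.

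The inductive step consists in extracting from $X$ a relatively closed homogeneous subspace $F\subseteq X$ such that $X\setminus F$ has strictly smaller Wadge rank. Once $F$ is in hand, the inductive hypothesis partitions the open set $X\setminus F$ into countably many pairwise disjoint subspaces, each closed in $X\setminus F$ and homogeneous; a routine refinement (covering an open subspace of $X$ by relatively closed sets of $X$) upgrades these to pieces that are closed in $X$, and adjoining $F$ yields the required decomposition. To construct $F$, I would lean on Banach--Mazur-style games played on the relatively open subsets of $X$, which are determined under $\AD$: a winning strategy for one player should produce, via a back-and-forth argument, a comeager-in-some-open-set subspace that is homogeneous, whereas a winning strategy for the other should allow one to localize to a smaller open subset and iterate. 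The Wadge self-duality dichotomy is the structural engine that forces the complexity of $F$ to capture the top Wadge class of $X$, so that $X\setminus F$ lies strictly lower in the hierarchy.

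The main obstacle will be globalizing the pieces supplied by the determinacy argument into a single homogeneous $F$ with honest autohomeomorphisms realizing every prescribed pair of points. Van Engelen's classification of zero-dimensional homogeneous Borel spaces in \cite{van_engelen_thesis} provides a template for the necessary amalgamation, but porting it beyond the Borel realm requires precisely the uniformization and absoluteness phenomena furnished by $\AD$, and I expect this step to absorb the bulk of the technical work.
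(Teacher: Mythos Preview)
First, note that the present paper does not prove Theorem \ref{theorem_determinacy}: it is quoted as one of the main results of \cite{medini_vidnyanszky}, so there is no proof here to compare yours against. That said, your outline has a genuine structural gap that would prevent the argument from going through as written.

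The core of your inductive step is the claim that from an arbitrary zero-dimensional $X\subseteq 2^\omega$ one can always extract a relatively closed homogeneous $F\subseteq X$ such that $X\setminus F$ has strictly smaller Wadge rank. Nothing you say justifies this. A closed-in-$X$ set has the form $X\cap C$ with $C$ closed in $2^\omega$, so $X\setminus F=X\cap (2^\omega\setminus C)$ is just $X$ restricted to an open set; for a space that is locally of the same complexity everywhere (for instance, any homogeneous non-Borel $X$), no such restriction lowers the Wadge rank, and your induction stalls. The Banach--Mazur game you invoke would at best hand you a \emph{comeager}-in-some-open-set homogeneous subspace, which is typically $\bP^0_2$ rather than closed; and the ``Wadge self-duality dichotomy'' does not by itself force a closed piece to absorb the top complexity of $X$. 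Finally, the step you call a ``routine refinement'' is not routine: if $U=X\setminus F$ is open in $X$ and $C\subseteq U$ is homogeneous and closed in $U$, intersecting $C$ with the closed-in-$X$ sets of an $F_\sigma$ presentation of $U$ generally destroys homogeneity, so you cannot upgrade the partition of $U$ to one by sets closed in $X$ without redoing the homogeneity argument from scratch.

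For orientation, the machinery actually deployed in this circle of results (see Sections \ref{section_main} and \ref{section_disjoint} for the analytic case) does not proceed by peeling off closed pieces along a Wadge induction. Instead one first reduces to the nowhere-countable meager-or-Baire situation and then uses van Engelen's criterion (Theorem \ref{theorem_van_engelen}) together with a Harrington/Steel-type embedding lemma (Lemma \ref{lemma_harrington}) to manufacture the required closed copies inside every clopen set. Under $\AD$ the Wadge-theoretic input is that the relevant classes are available at every level, which is what \cite{medini_vidnyanszky} exploits; it is this uniform embedding phenomenon, rather than a rank-decreasing recursion, that drives the proof.
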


\begin{theorem}[Medini, Vidny\'anszky]\label{theorem_counterexample}
In $\ZFC$, there exists a zero-dimensional space that is not $\sigma$-homogeneous.
\end{theorem}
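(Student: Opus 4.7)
The plan is to construct, via transfinite recursion of length $\cccc$ in $\ZFC$, a subspace $X\subseteq 2^\omega$ of cardinality $\cccc$ which is \emph{hereditarily rigid} in the sense that every uncountable $Y\subseteq X$ has only the identity as a self-homeomorphism. Such $X$ is automatically zero-dimensional, and if $X=\bigcup_n X_n$ were a $\sigma$-homogeneous decomposition, some $X_n$ would be uncountable by cardinality; since a homogeneous space with at least two points has a non-trivial self-homeomorphism, this would contradict hereditary rigidity.

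The enabling technical fact is Lavrentiev's theorem: every homeomorphism $h:Y_1\to Y_2$ between subsets of $2^\omega$ extends to a homeomorphism $\widetilde h:A\to B$ between $G_\delta$-subsets of $2^\omega$ containing $Y_1$ and $Y_2$. Since Borel codes for $G_\delta$-subsets of $2^\omega$ form a family of size $\cccc$, the collection of candidate extended homeomorphisms $g:A\to B$ (with $A,B$ $G_\delta$ in $2^\omega$ and $g$ non-trivial on an uncountable portion of $A\cap B$) may be enumerated as $(g_\alpha:\alpha<\cccc)$.

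At stage $\alpha<\cccc$, I would pick $p_\alpha\in A_\alpha$ with $q_\alpha:=g_\alpha(p_\alpha)\neq p_\alpha$, both disjoint from the ${<}\cccc$-many points decided at earlier stages; this is possible because $\{x\in A_\alpha\cap B_\alpha:g_\alpha(x)\neq x\}$ has size $\cccc$. Commit $p_\alpha\in X$ and $q_\alpha\notin X$, and simultaneously include a fresh ``generic'' point to ensure $|X|=\cccc$. After the construction, if $h:Y\to Y$ is a non-trivial self-homeomorphism of an uncountable $Y\subseteq X$, then extending $h$ via Lavrentiev yields $h\subseteq g_\alpha$ for some $\alpha$ with $Y\subseteq A_\alpha\cap B_\alpha$; provided $p_\alpha\in Y$, we get $h(p_\alpha)=q_\alpha\notin Y$, the desired contradiction.

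The main obstacle is the case $p_\alpha\notin Y$: the restriction $g_\alpha|_Y$ is itself a non-trivial homeomorphism that the single decision at stage $\alpha$ does not directly defeat. One must therefore ensure that \emph{every} ``cofinite'' restriction of every $g_\alpha$ is also killed, by dovetailing the recursion: enumerate pairs $(\alpha,F)$ with $F$ a finite (or countable) set of forbidden points, and at each stage kill $g_\alpha$ restricted to $A_\alpha\setminus F$. The collection of such pairs still has size $\cccc$, so the $\cccc$-length recursion accommodates them. The bookkeeping -- guaranteeing that at every stage a suitable witness $p$ can be found outside the growing pool of forbidden points, while preserving $|X|=\cccc$ -- is the technical heart of the argument.
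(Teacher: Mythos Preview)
The paper does not give a proof of this theorem; it is cited from \cite{medini_vidnyanszky}. So there is nothing in the present paper to compare your attempt against, and I can only assess your outline on its own terms.

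Your overall strategy---build a hereditarily rigid $X\subseteq 2^\omega$ of size $\cccc$ by transfinite recursion, using Lavrentiev to reduce to $\cccc$-many partial homeomorphisms $g_\alpha$ between $G_\delta$ sets---is a natural one, and the obstacle you isolate (the chosen $p_\alpha$ need not lie in a given uncountable $Y\subseteq X$) is exactly the difficulty. However, your proposed fix does not close the gap. Dovetailing over pairs $(\alpha,F)$ with $F$ finite or countable yields, for each fixed $\alpha$, a set $P_\alpha\subseteq X$ of killing points $p$ with $g_\alpha(p)\notin X$. Since distinct stages commit distinct fresh points, the sets $P_\alpha$ for different $\alpha$ are pairwise disjoint, hence $|X\setminus P_\alpha|=\cccc$ for every $\alpha$. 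An uncountable $Y\subseteq X\setminus P_\alpha$ is then untouched by every decision you made about $g_\alpha$, and nothing in your construction rules out $g_\alpha\re Y$ being a non-trivial self-homeomorphism of such a $Y$. The parameter $F$ does no real work here: you are not trying to avoid a prescribed finite set of points, you are trying to \emph{meet} an arbitrary uncountable subset of $X$, and there are $2^{\cccc}$ of those.

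What would actually suffice is an invariant of the form ``for every $\alpha$, the set $\{x\in X:g_\alpha(x)\in X\text{ and }g_\alpha(x)\neq x\}$ is countable'' (so that any uncountable $g_\alpha$-invariant $Y\subseteq X$ is fixed pointwise). But maintaining this simultaneously for all $\cccc$-many $g_\alpha$ while keeping $|X|=\cccc$ is not a matter of the simple bookkeeping you describe: already at stage $1$ the set of points forbidden by this condition (namely $\bigcup_\alpha\{g_\alpha(x_0),g_\alpha^{-1}(x_0)\}$) can have size $\cccc$. The actual construction in \cite{medini_vidnyanszky} requires further ideas; you should consult that paper directly.
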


\begin{theorem}[Medini, Vidny\'anszky]\label{theorem_definable_counterexample}
Assume $\VL$. Then there exists a coanalytic zero-dimensional space that is not $\sigma$-homogeneous.
\end{theorem}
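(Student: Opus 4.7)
The plan is to mimic the $\ZFC$ construction of a zero-dimensional non-$\sigma$-homogeneous set that yields Theorem \ref{theorem_counterexample}, but to perform the transfinite recursion along the canonical well-ordering of $2^\omega$ supplied by $\VL$, and then use a standard effective-descriptive-set-theoretic argument to upgrade the resulting set to a $\bP^1_1$ one.

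More concretely: assume $\VL$, and let $\prec$ denote the canonical $\bD^1_2$-good well-ordering of $2^\omega$, which has order type $\omega_1$ and all of whose proper initial segments are countable. Fix the enumeration $(r_\alpha)_{\alpha<\omega_1}$ of $2^\omega$ along $\prec$, together with a uniformly defined parallel $\prec$-enumeration of all the auxiliary coded objects that the construction of Theorem \ref{theorem_counterexample} iterates over (countable families of Borel subsets of $2^\omega$, candidate homeomorphisms between them, and the like). One then repeats that construction along $\prec$, arranging matters so that at each stage $\alpha$ the decision whether to place $r_\alpha$ into the target set $X$ uses only the countably many objects indexed by $\beta<\alpha$ and, crucially, only information available in some countable initial segment $L_{\gamma_\alpha}$ of the constructible hierarchy depending uniformly on $\alpha$.

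To conclude that $X$ is coanalytic I would invoke the classical recipe (going back to Erd\H{o}s--Kunen--Mauldin and refined by Miller) according to which any set built by a length-$\omega_1$ recursion whose stage-$\alpha$ step uses only parameters from a countable initial segment of $L$ admits a $\bP^1_1$ definition. The point is that ``$x\in X$'' becomes equivalent to the existence of a countable well-founded structure that codes the initial segment of the construction up to and including the stage at which $x$ is handled; well-foundedness is $\bP^1_1$ on the code, the existence of the code is witnessed uniformly in $x$ by a $\bS^1_2$ selector obtained from $\prec$, and the two combine to yield a $\bP^1_1$ description of $X$. Zero-dimensionality of $X$ is automatic from $X\subseteq 2^\omega$.

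The main technical obstacle I expect is the complexity bookkeeping: one must verify that the diagonalisation underlying Theorem \ref{theorem_counterexample} can be organised so Miller's recipe really applies, i.e.\ that the family of obstructions one cancels at stage $\alpha$ is enumerable inside $L_{\gamma_\alpha}$ and that each cancellation consumes only countable resources. A secondary issue is the formulation of the obstructions themselves: a candidate $\sigma$-homogeneous decomposition of an arbitrary subset of $2^\omega$ need not be witnessed by Borel homeomorphisms of the ambient space, so some care is needed in replacing the ``raw'' notion with a Borel (or at worst projective) surrogate that still suffices to refute $\sigma$-homogeneity and whose codes can be listed along $\prec$.
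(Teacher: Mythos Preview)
The paper does not contain a proof of Theorem~\ref{theorem_definable_counterexample}: it is quoted in the introduction as one of the main results of \cite{medini_vidnyanszky}, and no argument for it is given anywhere in the present paper. There is therefore nothing here to compare your proposal against.

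For what it is worth, your outline is the standard template for results of this shape---run the $\ZFC$ diagonalisation behind Theorem~\ref{theorem_counterexample} along the canonical $\bD^1_2$ well-ordering of $L$, keeping the stage-$\alpha$ bookkeeping confined to a countable initial segment of $L$, and then invoke a Miller-type coding argument to conclude that the resulting subset of $2^\omega$ is $\bP^1_1$. The two technical worries you raise (local enumerability of the obstructions, and replacing arbitrary homeomorphisms by coded surrogates) are exactly the points one has to address, but they are bookkeeping rather than conceptual obstacles; the actual proof in \cite{medini_vidnyanszky} proceeds along these lines.
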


It is clear from the above results that, in order to have a complete picture of $\sigma$-homogeneity in the zero-dimensional realm, only one piece is missing: is every zero-dimensional analytic space $\sigma$-homogeneous? This is \cite[Question 13.2]{medini_vidnyanszky}, and our main result gives an affirmative answer (see Theorem \ref{theorem_main}). The witnesses can be chosen analytic in general, or $\bD^1_2$ if one desires them to be pairwise disjoint (see Theorem \ref{theorem_main_disjoint}). In Section \ref{section_optimality}, we will see that these are the lowest-possible complexities. Furthermore, as Ostrovsky already noticed in \cite{ostrovsky_sigma}, results of this kind can easily be extended to all finite dimensions (see Section \ref{section_higher}). Finally, in Section \ref{section_finite}, we will examine the finite analogues of $\sigma$-homogeneity.

\section{Preliminaries, terminology and notation}\label{section_preliminaries}

Our reference for general set theory is \cite{jech}. Given a function $f:Z\longrightarrow W$, $A\subseteq Z$ and $B\subseteq W$, we will use the notation $f[A]=\{f(x):x\in A\}$ and $f^{-1}[B]=\{x\in Z:f(x)\in B\}$.

Our reference for topology is \cite{engelking}. We will write $X\approx Y$ to mean that the spaces $X$ and $Y$ are homeomorphic. A subset $M$ of a space $X$ is \emph{meager} if there exist nowhere dense subsets $M_n$ of $X$ for $n\in\omega$ such that $M\subseteq\bigcup_{n\in\omega}M_n$. A space $X$ is \emph{meager} if $X$ is a meager subset of $X$. A space is \emph{Baire} if no non-empty open subset of $X$ is meager. Given spaces $X$ and $Z$, an \emph{embedding} of $X$ in $Z$ is a function $j:X\longrightarrow Z$ such that $j:X\longrightarrow j[X]$ is a homeomorphism.

A zero-dimensional space $X$ is \emph{strongly homogeneous} (or \emph{h-homogeneous}) if every non-empty clopen subspace of $X$ is homeomorphic to $X$. This notion has been studied by several authors, both ``instrumentally'' and for its own sake (see the list of references in \cite{medini_products}). It is well-known that every strongly homogeneous space is homogeneous (see for example \cite[1.9.1]{van_engelen_thesis} or \cite[Proposition 3.32]{medini_thesis}). On the other hand, the reverse implication depends on set-theoretic assumptions (see \cite[Theorems 1.1 and 1.2]{carroy_medini_muller_homogeneous}).

Given a space $X$, recall that the \emph{Cantor-Bendixson derivative} of $X$ is defined as follows for every ordinal $\xi$:
\begin{itemize}
\item $X^{(0)}=X$,
\item $X^{(\xi+1)}=X^{(\xi)}\setminus\{x\in X^{(\xi)}:x\text{ is isolated in }X^{(\xi)}\}$,
\item $X^{(\xi)}=\bigcap_{\xi'<\xi}X^{(\xi')}$, if $\xi$ is a limit ordinal.
\end{itemize}
A space $X$ is \emph{scattered} if $X^{(\xi)}=\varnothing$ for some ordinal $\xi$. In this case, the minimal such $\xi$ is the \emph{Cantor-Bendixson rank} of $X$.

Our reference for descriptive set theory is \cite{kechris}. We will assume familiarity with the basic theory of Borel, analytic and coanalytic sets in Polish spaces. We will denote by $\Borel(X)$ the collection of all Borel subsets of a space $X$. Recall that, following \cite[page 315]{kechris}, one can define the classes $\bS^1_n(X)$, $\bP^1_n(X)$ and $\bD^1_n(X)=\bS^1_n(X)\cap\bP^1_n(X)$ for an arbitrary (that is, not necessarily Polish) space $X$, where $1\leq n<\omega$. Given $A\subseteq X$, simply declare $A\in\bS^1_n(X)$ (respectively $A\in\bP^1_n(X)$) if there exist a Polish space $Z$, an embedding $j:X\longrightarrow Z$, and $\widetilde{A}\in\bS^1_n(Z)$ (respectively $\widetilde{A}\in\bP^1_n(X)$) such that $j[A]=\widetilde{A}\cap j[X]$.

A space $X$ is a \emph{Borel space} (respectively \emph{analytic} or \emph{coanalytic}) if there exist a Polish space $Z$ and an embedding $j:X\longrightarrow Z$ such that $j[X]\in\Borel(Z)$ (respectively $j[X]\in\bS^1_1(Z)$ or $j[X]\in\bP^1_1(Z)$). It is well-known that a space $X$ is Borel (respectively analytic or coanalytic) iff $j[X]\in\Borel(Z)$ (respectively $j[X]\in\bS^1_1(Z)$ or $j[X]\in\bP^1_1(Z)$) for every Polish space $Z$ and every embedding $j:X\longrightarrow Z$ (see \cite[Proposition 4.2]{medini_zdomskyy}).

Next, we recall the fundamental definitions of Wadge theory, since it will be convenient to use this language in Section \ref{section_main} (see \cite{carroy_medini_muller_constructing} for more on this topic). Given a space $Z$ and $A,B\subseteq Z$, we will write $A\leq_\mathsf{W}B$ if there exists a continuous function $f:Z\longrightarrow Z$ such that $A=f^{-1}[B]$. Define $A\wc=\{B\subseteq Z:B\leq_\mathsf{W}A\}$. We will say that $\bG\subseteq\PP(Z)$ is a \emph{Wadge class} in $Z$ if there exists $A\subseteq Z$ such that $\bG=A\wc$. We will denote by $\bGc=\{Z\setminus A:A\in\bG\}$ the \emph{dual} class of $\bG$.

We conclude this section with a simple proposition, which can be safely assumed to be folklore. It shows that Baire spaces of sufficiently low complexity are Baire for a very strong reason.

\begin{lemma}\label{lemma_baire_dense_polish}
Let $X$ be a Baire space. Assume that $X$ is analytic or coanalytic. Then $X$ has a Polish dense subspace.
\end{lemma}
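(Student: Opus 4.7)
The plan is to find a Polish dense subspace of $X$ by exploiting the Baire property, which is enjoyed by analytic sets (and hence by coanalytic ones) even in $\ZF+\DC$.

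First, I would fix an embedding $j:X\longrightarrow Z$ into a Polish space and, by replacing $Z$ with the closure of $j[X]$, reduce to the situation where $X\subseteq Z$ is a dense subspace of a Polish space. By assumption $X\in\bS^1_1(Z)$ or $X\in\bP^1_1(Z)$; in either case $X$ has the Baire property in $Z$. So I can write
\[
X=(U\setminus M)\cup N,
\]
where $U\subseteq Z$ is open and $M,N\subseteq Z$ are meager. Picking closed nowhere dense sets $F_n\subseteq Z$ with $M\cup N\subseteq\bigcup_{n\in\omega}F_n$, I would set
\[
G=U\setminus\bigcup_{n\in\omega}F_n=U\cap\bigcap_{n\in\omega}(Z\setminus F_n),
\]
which is a $G_\delta$ subset of $Z$, hence Polish, and is contained in $X$ by construction.

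The remaining task is to show that $G$ is dense in $X$. For this I would first record the general fact that, since $X$ is dense in $Z$, every closed nowhere dense subset $F$ of $Z$ meets $X$ in a set that is nowhere dense in $X$: a non-empty open subset $V=X\cap W$ of $X$ contained in $F\cap X$ would force $W\subseteq\overline{X\cap W}\subseteq F$, contradicting that $F$ is nowhere dense in $Z$. Consequently, any subset of $X$ that is meager in $Z$ is meager in $X$. Now if $G$ were not dense in $X$, there would exist a non-empty open set $V\subseteq X$ disjoint from $G$; since $X\setminus G\subseteq\bigcup_{n\in\omega}F_n$, the set $V$ would then be meager in $Z$ and therefore meager in $X$, contradicting the hypothesis that $X$ is Baire. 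Hence $G$ is the desired dense Polish subspace.

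The one point that needs to be treated carefully, and is likely the only subtle step, is the appeal to the Baire property for coanalytic sets under $\ZF+\DC$; this follows from the classical fact that analytic sets have the Baire property (provable in $\ZF+\DC$) together with closure of the Baire property under complementation. Everything else is a routine manipulation of meager sets in a Polish space, aided by the density of $X$ in $Z$.
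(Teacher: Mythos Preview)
Your proof is correct and follows essentially the same approach as the paper's: embed $X$ densely in a Polish space $Z$, use the Baire property of analytic/coanalytic sets to find a $G_\delta$ subset $G\subseteq X$ whose complement in $X$ is meager in $Z$, and then use the density of $X$ in $Z$ to see that this complement is also meager in $X$, so that $G$ must be dense by the Baire hypothesis. The only cosmetic difference is that the paper quotes the decomposition $X=G\cup M$ with $G\in\bP^0_2(Z)$ directly, while you build $G$ explicitly from the form $X=(U\setminus M)\cup N$.
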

\begin{proof}
Assume without loss of generality that $X$ is a dense subspace of a Polish space $Z$. Since $X$ has the Baire property in $Z$, we can write $X=G\cup M$ by \cite[Proposition 8.23.ii]{kechris}, where $G\in\bP^0_2(Z)$ and $M$ is meager in $Z$. By \cite[Theorem 3.11]{kechris}, it will be enough to show that $G$ is dense in $Z$. Assume, in order to get a contradiction, that $U$ is a non-empty open subset of $Z$ such that $U\cap G=\varnothing$. Observe that $U\cap X$ is a non-empty open subset of $X$ because $X$ is dense in $Z$. Furthermore, using the density of $X$ again, it is easy to see that $M=M\cap X$ is meager in $X$ (see \cite[Exercise A.13.7]{van_mill_book}). Since $U\cap X\subseteq M$, this contradicts the assumption that $X$ is a Baire space.
\end{proof}

\section{Every zero-dimensional analytic space is $\sigma$-homogeneous}\label{section_main}

The following theorem combines \cite[Theorems 4.1(a) and 4.3(a)]{van_engelen}. Similar (if not identical) results were independently obtained by Ostrovsky (see \cite{ostrovsky_keldysh}) and Medvedev (see \cite{medvedev_meager} and \cite{medvedev_baire}).

\begin{theorem}[van Engelen]\label{theorem_van_engelen}
Let $X$ be a zero-dimensional space that satisfies the following conditions:
\begin{itemize}
\item Every non-empty clopen subspace of $X$ contains a closed subspace homeomorphic to $X$,
\item $X$ is either a meager space or it has a Polish dense subspace.
\end{itemize}
Then $X$ is strongly homogeneous.
\end{theorem}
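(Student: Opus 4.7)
The plan is to prove strong homogeneity by showing that every non-empty clopen subspace $U\subseteq X$ is homeomorphic to $X$. If $|X|\leq 1$ the conclusion is trivial, so assume $|X|\geq 2$. Hypothesis (i) then forces $X$ to be crowded, since a clopen singleton could not contain any closed copy of $X$. Applied iteratively to the two halves of any non-trivial clopen bipartition of $X$, hypothesis (i) also yields $X\oplus X\approx X$ via a Cantor--Bernstein-style argument for clopen embeddings.

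The main vehicle will be a back-and-forth construction. I would build matching trees of clopen refinements of $U$ and $X$ indexed by $\omega^{<\omega}$, partitioning each clopen piece at level $n$ into countably many pairwise homeomorphic smaller clopen pieces at level $n+1$, with diameters tending to $0$; the limiting process then produces a homeomorphism $U\to X$. For this to work, one must pair up pieces \emph{along clopen refinements on both sides}, so what is actually needed is the following strengthening of hypothesis (i): every non-empty clopen subspace of $X$ contains a \emph{clopen} copy of $X$, not merely a closed one. Once this upgraded version is available, the back-and-forth runs smoothly, and strong homogeneity follows.

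Upgrading ``closed copy'' to ``clopen copy'' is the heart of the theorem, and this is where hypothesis (ii) must enter. In the meager case, given a closed copy $C\approx X$ inside a non-empty clopen $V$, I would cover the relative complement $V\setminus C$ by countably many pairwise disjoint clopen bubbles, each of which by (i) contains its own closed copy of $X$, and then fuse $C$ together with these bubbles into a single clopen copy of $X$ inside $V$, using $X\oplus X\approx X$ for the fusion and a decomposition $X=\bigcup_n N_n$ with each $N_n$ closed nowhere dense in $X$ to control the iteration. In the Polish-dense case, fix a dense Polish subspace $P\subseteq X$ and descend to $V\cap P\subseteq V$; the classical characterization theorems for zero-dimensional Polish spaces (Alexandroff--Urysohn and the Borel-class refinements in van Engelen's thesis) applied to $V\cap P$ should yield a clopen copy of $P$ inside $V\cap P$, which can then be lifted to a clopen copy of $X$ inside $V$.

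The main obstacle will be the upgrade in the meager case: the absorption of $V\setminus C$ into a single copy of $X$ proceeds through infinitely many inductive stages and must maintain both convergence and the correct homeomorphism type, which demands careful bookkeeping against the nowhere dense levels $N_n$. Everything else reduces to standard zero-dimensional technology, combining the self-similarity supplied by (i) with the Baire-category-type rigidity supplied by (ii).
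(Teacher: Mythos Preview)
The paper does not prove this theorem at all: it is quoted verbatim from van Engelen's work (the text says it ``combines \cite[Theorems 4.1(a) and 4.3(a)]{van_engelen}'') and is used as a black box in the proof of Lemma~\ref{lemma_main}. So there is no in-paper proof to compare your proposal against.

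Regarding the proposal on its own merits: the overall architecture---upgrade ``closed copy in every clopen'' to ``clopen copy in every clopen'', then run a back-and-forth---is indeed the standard route to h-homogeneity results of this type, and is close in spirit to how van Engelen, Ostrovsky and Medvedev argue. The meager-case sketch is roughly right in outline, though the ``fusion'' step you describe is exactly the hard part and needs the kind of careful bookkeeping found in \cite{medvedev_meager} or \cite[\S4]{van_engelen}. The Polish-dense case, however, has a genuine gap: finding a clopen copy of the dense Polish subspace $P$ inside $V\cap P$ does not ``lift'' to a clopen copy of $X$ inside $V$. A clopen subset of $P$ is only relatively clopen and need not be the trace on $P$ of a clopen subset of $X$; and even when it is, there is no reason the ambient clopen piece of $X$ sitting over it should be homeomorphic to $X$ (the non-Polish part of $X$ is not controlled by $P$ alone). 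Van Engelen's actual argument in the Baire case does not attempt such a lift; it works directly with the pair $(X,P)$ and uses the completeness of $P$ to guarantee convergence of the back-and-forth, rather than first producing clopen copies.
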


Another result in the same family was obtained by Steel (see \cite[Theorem 2]{steel}), by making use of an ingenious lemma due to Harrington. We will not apply Steel's theorem, but instead we will combine Theorem \ref{theorem_van_engelen} with Harrington's lemma. This will make it easier to see that, for our purposes, no determinacy assumptions are necessary. This strategy was first employed by Michalewski in \cite{michalewski}. Before stating this lemma, we will need some preliminaries.

Given $i\in 2$, set
$$
Q_i=\{x\in 2^\omega:x(n)=i\text{ for all but finitely many }n\in\omega\}.
$$
Notice that every element of $2^\omega\setminus(Q_0\cup Q_1)$ is obtained by alternating finite blocks of zeros and finite blocks of ones. Define the function $\phi:2^\omega\setminus(Q_0\cup Q_1)\longrightarrow 2^\omega$ by setting
$$
\phi(x)(n)=\left\{
\begin{array}{ll} 0 & \textrm{if the $n^{\text{th}}$ block of zeros of $x$ has even length},\\
1 & \textrm{otherwise,}
\end{array}
\right.
$$
where we start counting with the $0^{\text{th}}$ block of zeros. Notice that $\phi$ is continuous. A Wadge class $\bG$ in $2^\omega$ is \emph{reasonably closed} if $\phi^{-1}[A]\cup Q_0\in\bG$ for every $A\in\bG$. For the purposes of this article, the only fact concerning these classes that we will need is that $\bS^1_1(2^\omega)$ and $\bP^1_1(2^\omega)$ are reasonably closed Wadge classes.

The original statement of Lemma \ref{lemma_harrington} (see \cite[Lemma 3]{steel}) asks that $B\in\bG\setminus\bGc$ and that suitable determinacy assumptions hold, from which $\bG=B\wc$ follows by Wadge Lemma (see \cite[Lemma 4.4]{carroy_medini_muller_constructing}). However, it is clear from the proof that this is the only place where the determinacy assumptions are used. Therefore, it is possible to give the following determinacy-free version of this result.

\begin{lemma}[Harrington]\label{lemma_harrington}
Let $\bG$ be a reasonably closed Wadge class in $2^\omega$, and let $A,B\subseteq 2^\omega$. Assume that $A\leq_{\mathsf{W}} B$ in $2^\omega$ and $\bG=B\wc$. Then there exists an injective continuous function $f:2^\omega\longrightarrow 2^\omega$ such that $A=f^{-1}[B]$.
\end{lemma}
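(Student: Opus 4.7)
Proof proposal:

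Since $A \leq_\mathsf{W} B$ in $2^\omega$, fix a continuous $g \colon 2^\omega \to 2^\omega$ with $A = g^{-1}[B]$. By the reasonable closure of $\bG = B\wc$, the set $D := \phi^{-1}[B] \cup Q_0$ lies in $\bG$, so there is a continuous $h \colon 2^\omega \to 2^\omega$ with $h^{-1}[B] = D$. The strategy is to produce an injective continuous $e \colon 2^\omega \to 2^\omega \setminus (Q_0 \cup Q_1)$ satisfying $\phi \circ e = g$ for which $h \circ e$ is also injective. Then $f := h \circ e$ is continuous and injective, and
\[
f^{-1}[B] \;=\; e^{-1}[D] \;=\; e^{-1}\bigl[\phi^{-1}[B]\bigr] \;=\; g^{-1}[B] \;=\; A,
\]
where the second equality uses that $e$ avoids $Q_0$.

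The map $e$ is constructed via a Cantor scheme $(\sigma_s)_{s \in 2^{<\omega}}$ in $2^{<\omega}$. We require $\sigma_\varnothing = \varnothing$, $\sigma_s \subsetneq \sigma_t$ whenever $s \subsetneq t$, and that $\sigma_s$ contain $n_s$ complete blocks of zeros whose parities match the common initial segment $g(x)\re n_s$ shared by all $x \in [s]$, with $n_s \to \infty$ as $|s| \to \infty$ (achievable by continuity of $g$). Crucially, at each splitting step we insist that $\sigma_{s0}$ and $\sigma_{s1}$ be extended far enough that $h[[\sigma_{s0}]]$ and $h[[\sigma_{s1}]]$ are contained in disjoint basic open subsets of $2^\omega$. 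Setting $e(x) := \bigcup_n \sigma_{x\re n}$, the first batch of conditions makes $e$ a continuous map into $2^\omega \setminus (Q_0 \cup Q_1)$ with $\phi \circ e = g$. The disjoint-cone condition forces $\sigma_{s0}$ and $\sigma_{s1}$ to be mutually incompatible strings, so $e$ is injective; and it guarantees $h(e(x)) \ne h(e(y))$ for $x \ne y$ by separating at the first coordinate of disagreement.

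The main technical obstacle is justifying, at every stage, that the disjoint-cone requirement can be met. Given a valid $\sigma_s$, the set of admissible infinite extensions in $2^\omega \setminus (Q_0 \cup Q_1)$ (those whose block parities continue the encoding of $g(x)$ for some $x \in [s]$) forms a perfect set, since block lengths beyond the parity constraints can be chosen freely. By continuity of $h$, two admissible extensions with distinct $h$-values yield suitable finite $\sigma_{s0}, \sigma_{s1}$, so the construction succeeds unless $h$ is constant on this perfect set. One checks, using $h^{-1}[B] = D$ together with the block encoding, that a constant value $w \in B$ forces $[s] \subseteq A$ while $w \notin B$ forces $[s] \cap A = \varnothing$; in either degenerate situation the subtree above $[s]$ is handled by a separate auxiliary injective continuous embedding into a Cantor subset of $B$ or of $2^\omega \setminus B$, whose existence is ensured by the non-triviality of $B$ implicit in the reasonable closure of $\bG$. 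Assembling the pieces produces the desired $f$.
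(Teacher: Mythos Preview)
The paper does not supply a proof of this lemma; it cites \cite[Lemma~3]{steel} and merely notes that the determinacy hypothesis appearing there is used only to obtain $\bG=B\wc$, which is now taken as an assumption. Your overall strategy---reduce $D:=\phi^{-1}[B]\cup Q_0$ back to $B$ via some continuous $h$, lift $g$ through $\phi$ to a map $e$ landing in $2^\omega\setminus(Q_0\cup Q_1)$, and set $f=h\circ e$---is exactly the skeleton of Harrington's argument, and your verification that $f^{-1}[B]=A$ is correct.

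The gap is in your degenerate case. First, the assertion that reasonable closure forces perfect subsets of both $B$ and $2^\omega\setminus B$ is not justified; reasonable closure does exclude $B\in\{\varnothing,2^\omega\}$, but the perfect-set claim for an arbitrary $B$ generating a reasonably closed class requires an argument you have not given. Second, and more seriously, even granting such Cantor sets you do not explain how the ``separate auxiliary embedding'' on a degenerate $[s]$ is glued to $h\circ e$ on the remainder of $2^\omega$ without destroying global injectivity: the disjoint-cone conditions you imposed at earlier splits separate values of $h\circ e$, not values of an unrelated auxiliary map, so collisions between the two pieces of $f$ are not ruled out. The observation that lets one avoid this detour is that $Q_0\subseteq D$ while $Q_1\cap D=\varnothing$, so $h$ is non-constant on \emph{every} basic clopen set $[\sigma]$; in Harrington's original construction this is exploited to interleave the $h$-separation with the block-parity encoding in such a way that the recursion never stalls and no side case is needed. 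You should consult \cite{steel} for the precise organisation.
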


Now we have all the tools needed to prove the zero-dimensional version of our main result (see Theorem \ref{theorem_main}). In fact, the core of our argument is contained in the proof of the following lemma. As in \cite{van_engelen_thesis}, we will say that a space $X$ is \emph{nowhere countable} if $X$ is non-empty and no non-empty open subset of $X$ is countable.

\begin{lemma}\label{lemma_main}
Let $X$ be a zero-dimensional analytic space. Assume that $X$ is nowhere countable, and that $X$ is either a meager space or a Baire space. Then $X$ is $\sigma$-homogeneous with analytic witnesses.
\end{lemma}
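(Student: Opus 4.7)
The plan is to combine Theorem \ref{theorem_van_engelen} with Harrington's Lemma \ref{lemma_harrington}, following the Michalewski strategy: extract strongly homogeneous clopen pieces of $X$ via van Engelen's criterion, where the self-similarity hypothesis of that theorem is supplied by Lemma \ref{lemma_harrington}. First I would embed $X$ densely into $2^\omega$ and write $B \in \bS^1_1(2^\omega)$ for its image. The hypotheses of the lemma descend to every non-empty clopen subspace of $B$: nowhere countable and meager-in-itself are inherited directly, and in the Baire case, Lemma \ref{lemma_baire_dense_polish} yields a Polish dense subspace of $X$ whose intersection with any non-empty open subset is again Polish and dense there.

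The core technical ingredient is the following self-similarity principle, derived from Lemma \ref{lemma_harrington}: whenever $U \subseteq 2^\omega$ is a non-empty clopen set with $U \cap B$ Wadge-equivalent to $B$ in $2^\omega$ and $B\wc$ reasonably closed, $U \cap B$ contains a copy of $B$ that is closed in $B$. To establish this, fix a homeomorphism $h : 2^\omega \to U$, set $A := h^{-1}[B] \subseteq 2^\omega$, and verify, by extending $h^{-1}$ to a continuous retract $2^\omega \to U$, that $A$ is Wadge-equivalent to $U \cap B$ and hence to $B$, so $A\wc = B\wc$ is reasonably closed. Applying Lemma \ref{lemma_harrington} to the reduction $B \leq_\mathsf{W} A$ then yields an injective continuous $f : 2^\omega \to 2^\omega$ with $B = f^{-1}[A]$. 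One computes that $(h \circ f)[B] = h[A] \cap h[f[2^\omega]] = (U \cap B) \cap h[f[2^\omega]]$ is a homeomorphic copy of $B$ (since $h \circ f$ is injective continuous from a compact space), sitting inside $U \cap B$ and closed in $B$ because $h[f[2^\omega]]$ is compact.

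Armed with this principle, I would let $O \subseteq 2^\omega$ be the union of all non-empty clopen sets satisfying the Wadge-equivalence condition, cover $O$ by countably many pairwise disjoint clopens $\{U_n\}$ using second countability, and apply Theorem \ref{theorem_van_engelen} to each $U_n \cap B$ to conclude it is strongly homogeneous, hence homogeneous. This decomposes $O \cap B$ into countably many analytic homogeneous witnesses. The main obstacle I expect to face is the remainder $B \setminus O$, together with the case in which $B\wc$ is not reasonably closed; here the natural plan is to first peel off Borel parts of $X$ by Ostrovsky's Theorem \ref{theorem_ostrovsky} and then iterate the self-similarity construction on clopen pieces of strictly simpler Wadge behavior. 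The delicate point is ensuring that this recursion terminates in countably many steps within $\ZF + \DC$, which would require either a well-foundedness argument on the relevant initial segment of the analytic Wadge hierarchy or a more structural peeling scheme that bypasses transfinite induction.
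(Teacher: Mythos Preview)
Your proposal has a genuine gap, and the paper's proof supplies precisely the missing idea. You attempt to apply Theorem~\ref{theorem_van_engelen} directly to clopen pieces $U_n\cap B$ of the image $B$ of $X$, but van Engelen's criterion requires that \emph{every} non-empty clopen subspace of $U_n\cap B$ contain a closed copy of $U_n\cap B$. Your self-similarity principle only delivers this for clopens $V$ with $V\cap B\equiv_\mathsf{W}B$; a clopen $V\subseteq U_n$ with $V\cap B<_\mathsf{W}B$ cannot contain a closed copy of $B$ at all, so the hypothesis of Theorem~\ref{theorem_van_engelen} fails there. (There is also no guarantee that a disjoint clopen cover of $O$ consists of sets individually satisfying the Wadge-equivalence condition.) You recognize the problem and propose to iterate down the Wadge hierarchy, peeling off Borel parts via Theorem~\ref{theorem_ostrovsky}, but as you yourself flag, you have no termination argument: in $\ZF+\DC$ there is no well-foundedness result for Wadge reducibility on analytic sets to appeal to, and no structural reason why the residue should become Borel after countably many steps. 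The same issue afflicts the ``$B\wc$ not reasonably closed'' case.

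The paper sidesteps all of this by a different device: rather than analyzing the Wadge class of $X$, it \emph{modifies} $X$ so as to force maximal analytic complexity inside every basic clopen set. One fixes a set $A\subseteq 2^\omega$ with $A\wc=\bS^1_1(2^\omega)$, plants pairwise disjoint nowhere dense copies $K_{n,i}\approx 2^\omega$ inside each basic clopen $U_n$ of $X$ (two families, $i\in 2$), and sets $X_i=(X\setminus\bigcup_n K_{n,i})\cup\bigcup_n A_{n,i}$, where $A_{n,i}\subseteq K_{n,i}$ is a copy of $A$. Then $X=X_0\cup X_1$, each $X_i$ is analytic, and each $U_n\cap X_i$ contains the closed subset $A_{n,i}$. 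Since $X_i$ is analytic and $A$ is $\bS^1_1$-complete, Lemma~\ref{lemma_harrington} (applied once, to the reasonably closed class $\bS^1_1(2^\omega)$) embeds $X_i$ as a closed subspace of $A$, hence of every $A_{n,i}$. This verifies the self-similarity hypothesis of Theorem~\ref{theorem_van_engelen} for $X_i$ at \emph{all} clopens simultaneously, yielding homogeneity of $X_0$ and $X_1$ with no recursion and no information about the Wadge class of $X$ beyond ``analytic''.
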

\begin{proof}
Let $\{U_n:n\in\omega\}$ be a base for $X$ consisting of non-empty clopen sets. Using the fact that analytic sets have the perfect set property, together with the assumption that $X$ is nowhere countable, it is a simple exercise to obtain $K_{n,i}$ for $(n,i)\in\omega\times 2$ satisfying the following conditions:
\begin{itemize}
\item Each $K_{n,i}\approx 2^\omega$,
\item Each $K_{n,i}$ is nowhere dense in $X$,
\item Each $K_{n,i}\subseteq U_n$,
\item $K_{n,i}\cap K_{m,j}=\varnothing$ whenever $(n,i)\neq (m,j)$.
\end{itemize}
By \cite[Theorem 26.1]{kechris}, we can fix $A\subseteq 2^\omega$ such that $A\wc=\bS^1_1(2^\omega)$. Fix homeomorphisms $h_{n,i}:2^\omega\longrightarrow K_{n,i}$ for $(n,i)\in\omega\times 2$, then set $A_{n,i}=h_{n,i}[A]$. Define
$$
X_i=\left(X\setminus\bigcup_{n\in\omega}K_{n,i}\right)\cup\left(\bigcup_{n\in\omega}A_{n,i}\right)
$$
for $i\in 2$. It is clear that each $X_i$ is analytic, and that $X=X_0\cup X_1$. Therefore, it remains to show that each $X_i$ is homogeneous.

So fix $i\in 2$. First observe that $X_i$ is dense in $X$. In particular, if $X$ is a meager space, then $X_i$ is also a meager space (see \cite[Exercise A.13.7]{van_mill_book}). On the other hand, if $X$ is a Baire space then $X$ has a dense Polish subspace by Lemma \ref{lemma_baire_dense_polish}, hence the same is true of $X_i$, because the $K_{n,i}$ are closed nowhere dense in $X$ (see \cite[Theorem 3.11]{kechris}).

By Theorem \ref{theorem_van_engelen} plus the previous paragraph, it will be enough to show that each $U_n\cap X_i$ contains a closed subspace homeomorphic to $X_i$. Let $X_i'$ be a subspace of $2^\omega$ such that $X_i'\approx X_i$, and observe that $X_i'\in\bS^1_1(2^\omega)$. It follows from our choice of $A$ that $X_i'\leq_\mathsf{W}A$. Therefore, by Lemma \ref{lemma_harrington}, this is witnessed by a continuous injection $f:2^\omega\longrightarrow 2^\omega$. Using the compactness of $2^\omega$, one sees that $f[X_i']$ is a closed subspace of $A$ homeomorphic to $X_i$. So $h_{n,i}[f[X_i']]$ is a closed subspace of $A_{n,i}$ homeomorphic to $X_i$ for each $n$. The observation that each $A_{n,i}=K_{n,i}\cap X_i$ is closed in $X_i$ concludes the proof.
\end{proof}

\begin{theorem}\label{theorem_main}
Let $X$ be a zero-dimensional analytic space. Then $X$ is $\sigma$-homogeneous with analytic witnesses.
\end{theorem}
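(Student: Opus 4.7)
My plan is to reduce to Lemma \ref{lemma_main} via two successive decompositions. First, I let
$$
S=\bigcup\{U\subseteq X:U\text{ is open in }X\text{ and countable}\}\quad\text{and}\quad Y=X\setminus S.
$$
Since $X$ is Lindel\"{o}f, $S$ is itself a countable open subset of $X$, and $Y$ is closed, analytic, zero-dimensional, and nowhere countable: a nonempty countable relatively open $W=U\cap Y$ (with $U$ open in $X$) would make $U=(U\cap S)\cup W$ a countable open subset of $X$, hence contained in $S$ by definition, forcing $W=\varnothing$. The countable piece $S$ is trivially $\sigma$-homogeneous via its decomposition into singletons.

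Next, inside $Y$ I let $V$ be the union of all open meager subspaces of $Y$. By the Banach Category Theorem, $V$ is open and meager in $Y$; and being open in $Y$, it is also meager as a subspace in its own right. The closed complement $B:=Y\setminus V$ is Baire: a nonempty $W\cap B$ meager in $B$ would, since $B$ is closed in $Y$, be meager in $Y$, so $W=(W\cap V)\cup(W\cap B)$ would be a nonempty open meager subset of $Y$, violating the maximality of $V$. Both $V$ and $B$ are analytic zero-dimensional subspaces of $X$, and $V$ is nowhere countable because it is open in the nowhere countable $Y$.

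The main obstacle is verifying that $B$ is nowhere countable. I would first observe that $Y$ has no isolated points, since any singleton would be a countable nonempty open subset of $Y$. Consequently every countable subset of $Y$ is meager in $Y$, being a countable union of nowhere-dense singletons. If $W=U\cap B$ were a nonempty countable relatively open subset of $B$ with $U$ open in $Y$, both $W$ and $U\cap V$ would be meager in $Y$, making $U=(U\cap V)\cup W$ a nonempty open meager subset of $Y$; hence $U\subseteq V$ and $W=\varnothing$, a contradiction.

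With these pieces in place, Lemma \ref{lemma_main} applies to both $V$ (meager, nowhere countable, zero-dimensional, analytic) and $B$ (Baire, nowhere countable, zero-dimensional, analytic), yielding countable covers by homogeneous analytic subspaces. Combined with the singleton decomposition of $S$ and the transitivity of analyticity (so that analytic-in-$V$ or analytic-in-$B$ is analytic in $X$), this exhibits $X=S\cup V\cup B$ as a countable union of homogeneous analytic subspaces, as required.
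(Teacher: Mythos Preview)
Your argument is correct and follows the same route as the paper: strip off the union $S$ of countable open sets, then split the nowhere-countable remainder into its open meager part and its closed Baire part, and apply Lemma~\ref{lemma_main} to each piece. The only imprecision is that $V$ or $B$ may be empty (hence not literally ``nowhere countable'' in the paper's sense), a harmless case the paper covers by writing ``either empty or nowhere countable.''
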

\begin{proof}
Set
$$
\UU=\{U:U\text{ is a countable open subset of }X\},
$$
and observe that $C=\bigcup\UU$ is a countable open subset of $X$. Set $D=X\setminus C$. It is easy to check that either $D$ is empty or it is nowhere countable. Therefore, we will assume without loss of generality that $X$ is nowhere countable.

Now set
$$
\VV=\{V:V\text{ is an open meager subset of }X\},
$$
and observe that $M=\bigcup\VV$ is an open meager subset of $X$. It follows that $M$ is a meager space, and that $M$ is either empty or nowhere countable. Set $B=X\setminus M$. It is easy to check that $B$ is a Baire space, and that $B$ is either empty or nowhere countable. At this point, the desired result clearly follows from Lemma \ref{lemma_main}.
\end{proof}

By carefully considering the above proofs, one sees that every nowhere countable zero-dimensional analytic space can be written as the union of four homogeneous spaces (two for the meager part, and two for the Baire part). Therefore, one might wonder whether ever zero-dimensional analytic space can be written as the union of finitely many homogeneous spaces. In Section \ref{section_finite}, among other things, we will show that this is not the case (see Corollary \ref{corollary_scattered}).

\section{Pairwise disjoint witnesses}\label{section_disjoint}

The purpose of this section is to observe that a minor modification of the arguments given in Section \ref{section_main} yields $\sigma$-homogeneity with pairwise disjoint witnesses. The price to pay for this improvement is an increase in the complexity of the witnesses.

\begin{lemma}\label{lemma_main_disjoint}
Let $X$ be a zero-dimensional analytic space. Assume that $X$ is nowhere countable and that $X$ is either a meager space or a Baire space. Then $X$ is $\sigma$-homogeneous with pairwise disjoint $\bD^1_2$ witnesses.
\end{lemma}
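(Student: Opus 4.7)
The plan is to adapt the construction in the proof of Lemma \ref{lemma_main} so that it outputs a pairwise disjoint decomposition. Keep the setup: fix $A \subseteq 2^\omega$ with $A\wc = \bS^1_1(2^\omega)$, Cantor copies $K_n \subseteq U_n$ (nowhere dense in $X$, pairwise disjoint), $A_n = h_n[A]$ via homeomorphisms $h_n: 2^\omega \longrightarrow K_n$, and define
$$
Y_0 = \left(X \setminus \bigcup_{n \in \omega} K_n\right) \cup \bigcup_{n \in \omega} A_n, \qquad Y_1 = X \setminus Y_0 = \bigcup_{n \in \omega} (K_n \setminus A_n).
$$
Then $X = Y_0 \sqcup Y_1$, and $Y_0$ is analytic (as in Lemma \ref{lemma_main}), so $Y_1$ is coanalytic in $X$. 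Both lie in $\bD^1_2(X)$, meeting the target complexity. Homogeneity of $Y_0$ is inherited from the proof of Lemma \ref{lemma_main}, so the real task is to prove $Y_1$ homogeneous.

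To address this I impose two refinements on the construction. First, I strengthen the choice of the $K_n$'s to be \emph{separated}, in the sense that one can simultaneously pick pairwise disjoint clopens $V_n$ of $X$ with $K_n \subseteq V_n$. This is a standard bookkeeping construction in zero-dimensional spaces and leaves the base $\{U_n\}$ intact, so the Van Engelen hypothesis for $Y_0$ is unaffected. With this separation, $V_n \cap Y_1 = K_n \setminus A_n =: B_n$, making each $B_n$ clopen in $Y_1$; since $B_n = h_n[2^\omega \setminus A]$, we conclude $Y_1 \approx \omega \times (2^\omega \setminus A)$ as topological spaces.

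Second, I arrange for $2^\omega \setminus A$ to be homogeneous by choosing $A = 2^\omega \setminus H$, where $H \subseteq 2^\omega$ is a strongly homogeneous zero-dimensional subspace with $H\wc = \bP^1_1(2^\omega)$. Such $H$ exists by running the argument of Lemma \ref{lemma_main} on the space $2^\omega$ itself with a complete coanalytic set used in place of the complete analytic $A$: Lemma \ref{lemma_harrington} still applies since $\bP^1_1(2^\omega)$ is also reasonably closed, so Theorem \ref{theorem_van_engelen} yields strong homogeneity of the resulting coanalytic piece. Automatically $A\wc = \bS^1_1(2^\omega)$, and $2^\omega \setminus A = H$ is homogeneous. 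A standard column-swapping argument shows that $\omega \times H$ is homogeneous: given $(n,y),(m,z) \in \omega \times H$ with $n \neq m$ and a homeomorphism $h: H \to H$ with $h(y) = z$, the involution acting as $h$ on column $n$ (sending it to column $m$), as $h^{-1}$ on column $m$ (sending it to column $n$), and as the identity elsewhere, is the desired homeomorphism. The main obstacle I anticipate is coordinating these pieces cleanly, most notably extracting from the proof of Lemma \ref{lemma_main} a single homogeneous Wadge-complete coanalytic space $H$, rather than just a two-piece $\sigma$-homogeneous decomposition; this is a routine but careful factoring of the existing argument.
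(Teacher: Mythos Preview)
Your Refinement 1 is impossible as stated, and this breaks the argument. Since $\{U_n\}$ is a \emph{base} for $X$, any nonempty clopen set $V_0\subseteq X$ must contain some $U_m$ with $m\neq 0$ (indeed, infinitely many). For such $m$ you have $K_m\subseteq U_m\subseteq V_0$ and $K_m\subseteq V_m$, so $\varnothing\neq K_m\subseteq V_0\cap V_m$, contradicting pairwise disjointness of the $V_n$'s. Thus there is no choice of nowhere dense Cantor sets $K_n\subseteq U_n$ that can be simultaneously separated by pairwise disjoint clopens of $X$; the ``standard bookkeeping'' you invoke does not exist here precisely because the obligations are indexed by a base. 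Without the separation, the pieces $B_n=h_n[H]$ are closed in $Y_1$ but need not be open (the family $\{K_m\}_{m\neq n}$ may accumulate onto $B_n$), so the identification $Y_1\approx\omega\times H$ fails and the column-swapping argument does not apply.

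The paper avoids this obstacle by not trying to decompose $Y_1$ as a topological sum at all. It keeps the construction of Lemma~\ref{lemma_main} untouched and applies Theorem~\ref{theorem_van_engelen} directly to $X\setminus X_0=\bigcup_n B_{n,0}$, using Lemma~\ref{lemma_harrington} for the reasonably closed class $\bP^1_1(2^\omega)$ in place of $\bS^1_1(2^\omega)$. The point is that every basic clopen $U_m\cap(X\setminus X_0)$ already contains $B_{m,0}=h_{m,0}[2^\omega\setminus A]$ as a closed subset, and since $(2^\omega\setminus A)\wc=\bP^1_1(2^\omega)$, Harrington's lemma provides a closed copy of the coanalytic space $X\setminus X_0$ inside each $B_{m,0}$. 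So van Engelen's hypothesis is verified exactly as for $X_0$, just with the dual class---no separation of the $K_n$'s and no preliminary construction of a homogeneous $H$ is needed. Your Refinement~2 is correct as far as it goes (one can indeed produce a strongly homogeneous $H\subseteq 2^\omega$ with $H\wc=\bP^1_1(2^\omega)$ by running the Lemma~\ref{lemma_main} argument on $2^\omega$), but it is rendered superfluous once you argue as the paper does.
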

\begin{proof}
We will use the same notation as in the proof of Lemma \ref{lemma_main}. In addition, set $B=2^\omega\setminus A$ and $B_{n,0}=h_{n,0}[B]$ for $n\in\omega$. Observe that
$$
X\setminus X_0=\bigcup_{n\in\omega}B_{n,0}
$$
is a coanalytic space. Therefore, it will be enough to show that $X\setminus X_0$ is homogeneous. This can be achieved as in the proof of Lemma \ref{lemma_main}, by applying Lemma \ref{lemma_harrington} to the reasonably closed Wadge class $\bGc=\bP^1_1(2^\omega)$.
\end{proof}

\begin{theorem}\label{theorem_main_disjoint}
Let $X$ be a zero-dimensional analytic space. Then $X$ is $\sigma$-homogeneous with pairwise disjoint $\bD^1_2$ witnesses.
\end{theorem}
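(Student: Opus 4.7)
The plan is to repeat the argument of Theorem \ref{theorem_main} almost verbatim, substituting Lemma \ref{lemma_main_disjoint} for Lemma \ref{lemma_main}, and to check that the pieces produced at the three separate stages combine into a single pairwise disjoint family with the correct global complexity.

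First, I would split off the countable open part $C=\bigcup\UU$ of $X$ as in the proof of Theorem \ref{theorem_main}. Since $C$ is countable, enumerate it as $C=\bigcup_{n\in\omega}\{x_n\}$; each singleton is closed in the metrizable space $X$, hence Borel and in particular $\bD^1_2$ in $X$, and is trivially homogeneous. The complement $D=X\setminus C$ is closed in $X$ (hence still zero-dimensional and analytic) and is either empty or nowhere countable.

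Second, inside $D$, I would let $M=\bigcup\VV$ be the union of all open meager subsets of $D$, and let $B=D\setminus M$. Then $M$ is open in $D$ and hence locally closed in $X$, so $M$ is Borel (and in particular in $\bD^1_2(X)$); and $B$ is closed in $D$, hence closed in $X$. As in the proof of Theorem \ref{theorem_main}, $M$ is either empty or a nowhere countable meager space, and $B$ is either empty or a nowhere countable Baire space. Applying Lemma \ref{lemma_main_disjoint} to each nonempty one of them gives disjoint decompositions $M=M_0\sqcup M_1$ and $B=B_0\sqcup B_1$ with $M_i\in\bD^1_2(M)$ and $B_j\in\bD^1_2(B)$, each homogeneous.

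Finally, I need to verify that each $M_i$ lies in $\bD^1_2(X)$, and similarly each $B_j$. This follows from the stability of $\bD^1_2$ under relativization: if $Y\in\bD^1_2(X)$ and $A\in\bD^1_2(Y)$, then fixing an embedding of $X$ into a Polish space $Z$ and writing $Y=\widetilde{Y}\cap X$ and $A=\widetilde{A}\cap Y$ with $\widetilde{Y},\widetilde{A}\in\bD^1_2(Z)$ yields $A=(\widetilde{A}\cap\widetilde{Y})\cap X$, which stays in $\bD^1_2(X)$ because both $\bS^1_2(Z)$ and $\bP^1_2(Z)$ are closed under intersection. The overall decomposition
$$
X=\bigcup_{n\in\omega}\{x_n\}\,\cup\,M_0\,\cup\,M_1\,\cup\,B_0\,\cup\,B_1
$$
is then into countably many pairwise disjoint homogeneous $\bD^1_2$ subspaces of $X$, since $C$, $M$, $B$ are mutually disjoint by construction and the internal pieces are disjoint by Lemma \ref{lemma_main_disjoint}. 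There is no substantial obstacle beyond invoking that lemma: once it is in hand the rest is bookkeeping, the key point being that the three stages of the decomposition live in mutually disjoint pieces of $X$.
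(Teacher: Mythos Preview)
Your proof is correct and follows essentially the same approach as the paper, which simply says ``Proceed as in the proof of Theorem \ref{theorem_main}, but apply Lemma \ref{lemma_main_disjoint} instead of Lemma \ref{lemma_main}.'' Your version makes explicit the bookkeeping the paper leaves to the reader, in particular the verification that the $\bD^1_2$ complexity of the pieces relative to $M$ and $B$ transfers to $\bD^1_2$ relative to $X$; one small remark is that Lemma \ref{lemma_main_disjoint} as stated only promises countably many pairwise disjoint $\bD^1_2$ witnesses rather than exactly two, so writing $M=M_0\sqcup M_1$ is reading the proof rather than the statement---but your argument goes through unchanged with countably many pieces.
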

\begin{proof}
Proceed as in the proof of Theorem \ref{theorem_main}, but apply Lemma \ref{lemma_main_disjoint} instead of Lemma \ref{lemma_main}.
\end{proof}

\section{Optimality}\label{section_optimality}

In this section, we will point out that the complexity of the witnesses to $\sigma$-homogeneity obtained in Section \ref{section_main} is as low as possible. In the case of Theorem \ref{theorem_main}, this follows from Theorem \ref{theorem_counterexample}. In the case of Theorem \ref{theorem_main_disjoint}, this follows from Corollary \ref{corollary_counterexample_disjoint}.

A weaker version of the following result is stated as part of \cite[Theorem 12.2]{medini_vidnyanszky}. However, it is easy to realize that the exact same proof (see \cite[Lemma 12.1]{medini_vidnyanszky}) actually yields the stronger result stated here.

\begin{theorem}[Medini, Vidny\'anszky]\label{theorem_counterexample}
Assume $\VL$. Then there exists a zero-dimensional analytic space that is not $\sigma$-homogeneous with coanalytic witnesses.	
\end{theorem}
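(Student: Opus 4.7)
The plan is to carry out a $\VL$-diagonal construction along the lines of \cite[Lemma 12.1]{medini_vidnyanszky}. Under $\VL$ we have $\cccc=\aleph_1$ together with a $\bS^1_2$-good wellordering $\prec$ of $\omega^\omega$, and together these enable a transfinite recursion of length $\omega_1$ that simultaneously (i) produces an analytic set and (ii) defeats every coanalytic-in-$X$ decomposition.

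First, I would use $\prec$ to enumerate in order type $\omega_1$ all countable sequences $\vec{C}^\alpha=\langle C^\alpha_n : n<\omega\rangle$ of codes for coanalytic subsets of $2^\omega$. These are all the potential coanalytic witnesses to $\sigma$-homogeneity that must be killed. Second, I would construct $X\subseteq 2^\omega$ by transfinite recursion: at stage $\alpha$, if the subsets coded by $\vec{C}^\alpha$ cover the approximation of $X$ built so far, a pigeonhole argument selects an index $n(\alpha)$ whose piece is large, and into that piece I would insert a pair of points of $X$ carrying incompatible local invariants (for instance, one whose neighborhood filter has a prescribed rigidity property that the other lacks), making the piece non-homogeneous.

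Third --- and this is the crux --- I would verify that the final $X$ is analytic, not merely $\bS^1_2$. The idea is to arrange membership in $X$ to be witnessed existentially: $x\in X$ iff some $y\in\omega^\omega$ codes a ``trace'' of the recursion up to a stage where $x$ is inserted, with the coding arranged so that the ``correct trace'' predicate is Borel in $(x,y)$. This uses the goodness of $\prec$ to absorb the $\bS^1_2$ quantifier into an existential real, and is precisely the delicate step performed in \cite[Lemma 12.1]{medini_vidnyanszky}.

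The main obstacle is this last step. Without such bookkeeping the construction yields only a $\bS^1_2$ set, which is the version of the statement appearing as part of \cite[Theorem 12.2]{medini_vidnyanszky}. The content of the present strengthening is the observation that the same construction can be arranged to output an analytic $X$, and that the candidate decompositions it diagonalizes against already include all coanalytic-in-$X$ decompositions rather than only those of strictly lower complexity.
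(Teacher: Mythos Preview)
Your proposal is essentially correct and aligns with the paper's treatment: the paper gives no new argument here, merely observing that the proof of \cite[Lemma 12.1]{medini_vidnyanszky} already yields the stronger conclusion, and you have correctly identified both the source and the nature of the strengthening (the same $\VL$-recursion produces an analytic $X$ and already diagonalizes against all coanalytic-in-$X$ decompositions).

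One caution: your sketch of the diagonalization step (``insert a pair of points with incompatible local invariants'') is a plausible paraphrase, but you should verify it against the actual mechanism in \cite[Lemma 12.1]{medini_vidnyanszky} rather than improvise; the paper's point is precisely that no modification of that construction is needed, so any deviation in your write-up from the cited lemma would undercut the claim.
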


Given a space $X=\bigcup_{n\in\omega}X_n$, where the $X_n$ are pairwise disjoint, it is clear that $X_n\in\bS^1_1(X)$ for each $n$ iff $X_n\in\bP^1_1(X)$ for each $n$. Combining this observation with Theorem \ref{theorem_counterexample} immediately yields the following result.

\begin{corollary}\label{corollary_counterexample_disjoint}
Assume $\VL$. Then there exists a zero-dimensional analytic space $X$ with the following properties:
\begin{itemize}
\item $X$ is not $\sigma$-homogeneous with pairwise disjoint analytic witnesses,
\item $X$ is not $\sigma$-homogeneous with pairwise disjoint coanalytic witnesses.
\end{itemize}
\end{corollary}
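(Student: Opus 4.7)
My plan is to take $X$ to be the zero-dimensional analytic space provided by Theorem \ref{theorem_counterexample}, and verify that this $X$ satisfies both bullets. The second bullet is immediate: pairwise disjoint coanalytic witnesses would in particular be coanalytic witnesses, contradicting the conclusion of Theorem \ref{theorem_counterexample}.

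For the first bullet, I would use the duality observation recorded just above the corollary. Suppose, for contradiction, that $X=\bigcup_{n\in\omega}X_n$ with the $X_n$ homogeneous, pairwise disjoint, and each $X_n\in\bS^1_1(X)$. Since $\bS^1_1(X)$ is closed under countable unions, $\bigcup_{m\neq n}X_m\in\bS^1_1(X)$ for every $n\in\omega$, and pairwise disjointness then gives $X_n=X\setminus\bigcup_{m\neq n}X_m\in\bP^1_1(X)$. So the $X_n$ would in particular be coanalytic witnesses to $\sigma$-homogeneity, again contradicting Theorem \ref{theorem_counterexample}.

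There is no real obstacle here: once one has the strengthened counterexample recorded as Theorem \ref{theorem_counterexample} (which the authors explicitly note is a free byproduct of the proof of \cite[Lemma 12.1]{medini_vidnyanszky}), the entire argument reduces to the observation that a countable partition of $X$ into analytic pieces is automatically a partition into coanalytic pieces, collapsing the ``pairwise disjoint analytic'' and ``pairwise disjoint coanalytic'' variants of $\sigma$-homogeneity into a single condition.
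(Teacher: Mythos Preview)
Your proposal is correct and matches the paper's own argument essentially verbatim: take the space from Theorem \ref{theorem_counterexample}, note that the second bullet is a trivial weakening of that theorem's conclusion, and derive the first bullet from the duality observation that a countable pairwise disjoint cover by $\bS^1_1(X)$ sets is automatically a cover by $\bP^1_1(X)$ sets.
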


\section{Higher dimensions}\label{section_higher}

In this section, we will see that the results obtained so far, together with those of \cite{medini_vidnyanszky}, yield a complete picture of $\sigma$-homogeneity in the finite-dimensional realm. We begin by extending Theorems \ref{theorem_main} and \ref{theorem_main_disjoint}. The proof is essentially due to Ostrovsky, who applied the same argument in the Borel context (see \cite[page 663]{ostrovsky_sigma}). However, we do not know whether the assumption ``finite-dimensional'' can be dropped in the following theorem (see Question \ref{question_analytic}).

\begin{theorem}\label{theorem_main_generalized}
Let $X$ be a finite-dimensional analytic space. Then:
\begin{itemize}
\item $X$ is $\sigma$-homogeneous with analytic witnesses,
\item $X$ is $\sigma$-homogeneous with pairwise disjoint $\bD^1_2$ witnesses.
\end{itemize}
\end{theorem}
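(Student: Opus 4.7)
The plan is to reduce the statement to the zero-dimensional case by a standard decomposition from dimension theory and then apply Theorems \ref{theorem_main} and \ref{theorem_main_disjoint} piecewise. This is essentially the approach suggested by Ostrovsky in the Borel setting (see \cite[page 663]{ostrovsky_sigma}), with only the base results and a routine complexity transfer differing.

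Set $n=\dim X$. By the Menger--N\"obeling embedding theorem, we may view $X$ as a subspace of $\mathbb{R}^{2n+1}$, and by the characterization of analyticity recalled in Section \ref{section_preliminaries} the image is analytic in $\mathbb{R}^{2n+1}$. The classical decomposition theorem of dimension theory, applied to the Polish space $\mathbb{R}^{2n+1}$, yields a covering $\mathbb{R}^{2n+1}=Z_0\cup\cdots\cup Z_{2n+1}$ in which each $Z_i$ is a zero-dimensional Borel subspace (in the Polish setting one may in fact take each $Z_i$ to be a $G_\delta$). After replacing $Z_i$ with $Z_i\setminus\bigcup_{j<i}Z_j$, which remains Borel and zero-dimensional as a subspace of $Z_i$, we may assume that the $Z_i$ are pairwise disjoint. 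Setting $X_i=X\cap Z_i$ produces a pairwise disjoint finite decomposition $X=\bigsqcup_{i\leq 2n+1}X_i$ in which each $X_i$ is a zero-dimensional analytic subspace of $X$, and in fact Borel in $X$.

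To prove the first conclusion, I would apply Theorem \ref{theorem_main} to each $X_i$, obtaining a $\sigma$-homogeneous decomposition $X_i=\bigcup_{k\in\omega}Y_{i,k}$ with $Y_{i,k}\in\bS^1_1(X_i)$, and then concatenate the countable families. For the second conclusion, I would apply Theorem \ref{theorem_main_disjoint} to each $X_i$ to obtain pairwise disjoint $\bD^1_2$ witnesses of homogeneity within each $X_i$; since the $X_i$ themselves are already pairwise disjoint, the combined family is automatically pairwise disjoint. In both cases the final step is a complexity transfer: showing that witnesses of class $\bS^1_1$ (respectively $\bD^1_2$) in $X_i$ remain of class $\bS^1_1$ (respectively $\bD^1_2$) in $X$. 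This is routine from \cite[Proposition 4.2]{medini_zdomskyy} and the fact that $X_i$ is Borel in $X$, together with the closure of $\bS^1_1$ and $\bD^1_2$ under intersection with Borel sets. The main (and modest) obstacle is extracting the exact form of the dimension-theoretic decomposition in the Polish context; the rest of the argument is bookkeeping.
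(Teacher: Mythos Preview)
Your proposal is correct and follows essentially the same route as the paper: embed $X$ in some $\mathbb{R}^m$, split $\mathbb{R}^m$ into finitely many pairwise disjoint zero-dimensional Borel pieces, intersect with $X$, and apply Theorems \ref{theorem_main} and \ref{theorem_main_disjoint} piecewise. The only cosmetic difference is that the paper uses the concrete $\mathbb{Q}/(\mathbb{R}\setminus\mathbb{Q})$ coordinate decomposition of $\mathbb{R}^m$ rather than the general decomposition theorem, and it leaves the complexity transfer implicit where you spell it out.
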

\begin{proof}
It is a fundamental result of dimension theory that every finite-dimensional space is homeomorphic to a subspace of $\RRR^n$ for some $n\in\omega$ (see for example \cite[Theorem 3.3.5]{van_mill_book}). Since $\QQQ$ and $\RRR\setminus\QQQ$ are both zero-dimensional and analytic, it follows that $X$ can be written as $X=\bigcup_{k\in n}X_k$ for some $n\in\omega$, where the $X_k$ are zero-dimensional, analytic, and pairwise disjoint. The desired results now follow from Theorems \ref{theorem_main} and \ref{theorem_main_disjoint}.
\end{proof}

Similarly, one can extend Theorems \ref{theorem_ostrovsky} and \ref{theorem_determinacy} as follows.

\begin{theorem}[Ostrovsky]\label{theorem_ostrovsky_generalized}
Every finite-dimensional Borel space is $\sigma$-homogeneous with pairwise disjoint $\mathsf{G}_\delta$ witnesses.
\end{theorem}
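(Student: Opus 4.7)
The plan is to adapt the strategy of Theorem \ref{theorem_main_generalized} to the Borel setting, with the extra complexity control required to guarantee $\mathsf{G}_\delta$ witnesses. By the classical embedding result \cite[Theorem 3.3.5]{van_mill_book}, assume without loss of generality that $X\subseteq\RRR^n$. The core step is a decomposition lemma, proved by induction on $n$: every Borel subspace $Y$ of $\RRR^n$ admits a pairwise disjoint partition $Y=\bigsqcup_{k\in\omega}Y_k$ in which each $Y_k$ is zero-dimensional and $\mathsf{G}_\delta$ in $Y$.

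For the inductive step, set $Y_0=Y\cap(\RRR\setminus\QQQ)^n$, which is zero-dimensional and $\mathsf{G}_\delta$ in $Y$, since $(\RRR\setminus\QQQ)^n$ is $\mathsf{G}_\delta$ in $\RRR^n$. The remainder $Y\setminus Y_0$ is covered by the countable family of rational hyperplanes $H_{i,q}=\{x\in\RRR^n:x_i=q\}$ (for $1\leq i\leq n$ and $q\in\QQQ$). Enumerate these as $\{H_k:k\in\omega\}$ and set $Z_k=(Y\cap H_k)\setminus\bigcup_{j<k}H_j$. Each $Z_k$ is the intersection of a closed set with an open set in $Y$, hence locally closed and therefore $\mathsf{G}_\delta$ in $Y$. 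Identifying $H_k$ with $\RRR^{n-1}$, each $Z_k$ becomes a Borel subspace of $\RRR^{n-1}$, so the inductive hypothesis decomposes it into zero-dimensional subsets that are $\mathsf{G}_\delta$ in $Z_k$, and therefore also $\mathsf{G}_\delta$ in $Y$.

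Applying the lemma to $X$ yields $X=\bigsqcup_{k\in\omega}X_k$ with each $X_k$ zero-dimensional, Borel, and $\mathsf{G}_\delta$ in $X$. Theorem \ref{theorem_ostrovsky} further partitions each $X_k=\bigsqcup_{m\in\omega}X_{k,m}$ into homogeneous, pairwise disjoint subsets closed in $X_k$. Since closed sets are $\mathsf{G}_\delta$ in metrizable spaces and $X_k$ is $\mathsf{G}_\delta$ in $X$, each $X_{k,m}$ is the intersection of two $\mathsf{G}_\delta$ subsets of $X$, hence $\mathsf{G}_\delta$ in $X$. Reindexing $\{X_{k,m}:(k,m)\in\omega\times\omega\}$ gives the desired countable pairwise disjoint family of homogeneous $\mathsf{G}_\delta$ witnesses.

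The main obstacle is securing $\mathsf{G}_\delta$ (rather than merely Borel) complexity in the zero-dimensional decomposition. The naive ``rational-coordinate pattern'' partition of $\RRR^n$ used in Theorem \ref{theorem_main_generalized} yields pieces of complexity as high as $\mathsf{G}_{\delta\sigma}$, which is insufficient: a closed subset of such a piece need not be $\mathsf{G}_\delta$ in $X$. The inductive hyperplane construction sidesteps this because $(\RRR\setminus\QQQ)^n$ is genuinely $\mathsf{G}_\delta$ and locally closed sets are genuinely $\mathsf{G}_\delta$, behaviors that propagate correctly through the induction and combine correctly with the closed-in-piece output of Theorem \ref{theorem_ostrovsky}.
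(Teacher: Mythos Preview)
Your proof is correct and follows the same high-level strategy the paper indicates for this result: embed in $\RRR^n$, decompose into pairwise disjoint zero-dimensional pieces, then invoke Theorem \ref{theorem_ostrovsky}. The paper itself offers no argument beyond the word ``similarly'' (referring to the proof of Theorem \ref{theorem_main_generalized}), so your treatment is more careful than what appears here. You are right that the literal $2^n$-piece partition of $\RRR^n$ into products of $\QQQ$'s and $(\RRR\setminus\QQQ)$'s used in Theorem \ref{theorem_main_generalized} does not give $\mathsf{G}_\delta$ pieces --- for instance $\QQQ\times(\RRR\setminus\QQQ)$ is dense and meager in $\RRR^2$, hence not $\mathsf{G}_\delta$ --- so some refinement is genuinely needed to reach the stated complexity. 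Your inductive hyperplane construction is a clean way to do this; one could alternatively appeal to the $\mathsf{G}_\delta$-Enlargement Theorem mentioned in Section \ref*{section_finite}+2 (the open-questions section), but disjointifying a $\mathsf{G}_\delta$ cover runs into the same complexity issue, so an argument of your type is needed regardless.
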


\begin{theorem}\label{theorem_determinacy_generalized}
Assume $\AD$. Then every finite-dimensional space is $\sigma$-homogeneous with pairwise disjoint $\mathsf{G}_\delta$ witnesses.
\end{theorem}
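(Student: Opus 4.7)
The plan is to mimic the strategy used for Theorem \ref{theorem_main_generalized}, substituting Theorem \ref{theorem_determinacy} for Theorem \ref{theorem_main} at the end. By the embedding theorem of dimension theory, I would first embed $X$ as a subspace of $\RRR^n$ for some $n \in \omega$.

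The one technical twist compared to Theorem \ref{theorem_main_generalized} is that its finite decomposition of $\RRR^n$ into $2^n$ Borel pieces will not directly yield $G_\delta$ witnesses, because $\QQQ$ is only $F_\sigma$ and not $G_\delta$ in $\RRR$. So I would refine the initial decomposition: write $\RRR = (\RRR \setminus \QQQ) \cup \bigcup_{q \in \QQQ} \{q\}$ and take products to obtain a pairwise disjoint decomposition $\RRR^n = \bigcup_{k \in \omega} P_k$, where each $P_k$ is a product of copies of $\RRR \setminus \QQQ$ and singletons of rationals. Each $P_k$ is then zero-dimensional and $G_\delta$ in $\RRR^n$. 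Intersecting with $X$ yields $X = \bigcup_{k \in \omega} X_k$ with each $X_k = X \cap P_k$ zero-dimensional and $G_\delta$ in $X$ (the latter because $G_\delta$-ness is preserved under passing to subspaces).

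Next, assuming $\AD$, I would apply Theorem \ref{theorem_determinacy} to each $X_k$ to get a further pairwise disjoint decomposition $X_k = \bigcup_{m \in \omega} X_{k,m}$ where each $X_{k,m}$ is homogeneous and closed in $X_k$. The key observation is that if $X_k$ is $G_\delta$ in $X$ and $X_{k,m}$ is closed in $X_k$, then $X_{k,m} = X_k \cap C$ for some closed $C \subseteq X$, and since closed subsets of a metrizable space are $G_\delta$, the set $X_{k,m}$ is the intersection of two $G_\delta$ subsets of $X$, hence itself $G_\delta$ in $X$. The composite family $\{X_{k,m} : k, m \in \omega\}$ then witnesses the desired $\sigma$-homogeneity.

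There is no real obstacle here: once Theorems \ref{theorem_determinacy} and \ref{theorem_main_generalized} are available, the proof is essentially automatic. The only point that requires any thought is the choice of initial decomposition of $\RRR^n$, which must be countable (not $2^n$) to accommodate the $G_\delta$ complexity requirement for the rational coordinates.
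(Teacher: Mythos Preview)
Your proposal is correct and follows the same outline the paper indicates with its one-word ``Similarly'': embed $X$ in some $\RRR^n$, split into pairwise disjoint zero-dimensional pieces, and apply Theorem~\ref{theorem_determinacy} to each piece. You have moreover correctly identified and resolved a point the paper glosses over: the $2^n$ pieces coming from the $\QQQ$/$(\RRR\setminus\QQQ)$ partition of $\RRR^n$ are not all $\mathsf{G}_\delta$ (e.g.\ $\QQQ\times(\RRR\setminus\QQQ)$ is not $\mathsf{G}_\delta$ in $\RRR^2$, since slicing at an irrational second coordinate would make $\QQQ$ a $\mathsf{G}_\delta$ in $\RRR$), so one must refine further by splitting each $\QQQ$-factor into singletons. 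Your observation that ``closed in a $\mathsf{G}_\delta$ subspace'' is $\mathsf{G}_\delta$ in the ambient metrizable space is exactly what makes the complexity bound go through. This is the natural way to substantiate the paper's ``Similarly'', and nothing is missing.
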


Given that Theorems \ref{theorem_ostrovsky} and \ref{theorem_determinacy} give closed witnesses, it is natural to wonder whether ``$\mathsf{G}_\delta$'' can be improved to ``closed'' in the above two theorems. The following proposition shows that this is not the case, and that $\mathsf{G}_\delta$ is the optimal complexity, even dropping the requirement that the witnesses are pairwise disjoint.

\begin{proposition}
There exists a one-dimensional compact space that is not $\sigma$-homogeneous with $\mathsf{F}_\sigma$ witnesses.
\end{proposition}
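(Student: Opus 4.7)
The plan is to exhibit a one-dimensional compact space $X$ in which points of at least two distinct local topological types are dense, and then to combine the Baire Category Theorem with the rigidity imposed by homogeneity to rule out any $\sigma$-decomposition with $\mathsf{F}_\sigma$ witnesses. A suitable $X$ is a compact dendrite in which branch points of order $3$ and branch points of order $4$ are both dense, with no other branch orders occurring; the existence of such a space (a generalized Wa\.{z}ewski--Charatonik dendrite) is a standard result in continuum theory. Since any dendrite is compact and one-dimensional, and since the order of a branch point is a local topological invariant, every non-empty open subset of $X$ contains points whose pointed germs in $X$ are pairwise non-homeomorphic.

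Suppose, towards a contradiction, that $X=\bigcup_{n\in\omega}X_n$ with each $X_n\in\mathsf{F}_\sigma(X)$ and homogeneous. Writing $X_n=\bigcup_k F_{n,k}$ with $F_{n,k}$ closed in $X$, and using that $X$ is a Baire space (being compact metrizable), some $F_{n_0,k_0}$ must have non-empty interior, so $U_0:=\mathrm{int}_X(X_{n_0})$ is non-empty, open in $X$, and contained in $X_{n_0}$. The key observation is that all points of $U_0$ share the same pointed germ in $X$: for any $p\in U_0$, the set $U_0$ is simultaneously an open neighborhood of $p$ in $X$ and in $X_{n_0}$, with coinciding induced subspace topologies, so the germs $(X,p)$ and $(X_{n_0},p)$ agree; and for $p,q\in U_0$, homogeneity of $X_{n_0}$ supplies a self-homeomorphism $h\colon X_{n_0}\longrightarrow X_{n_0}$ with $h(p)=q$, which identifies $(X_{n_0},p)$ with $(X_{n_0},q)$. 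Chaining, $(X,p)\cong(X_{n_0},p)\cong(X_{n_0},q)\cong(X,q)$.

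This directly contradicts the defining property of $X$: the non-empty open set $U_0$ must contain points with non-homeomorphic pointed germs. The main obstacle is locating a concrete one-dimensional compact space in which two distinct local types are both dense; invoking the dendrite construction above handles this cleanly, but relies on (standard but non-trivial) classification theory of dendrites. Any alternative compact one-dimensional space with the same density property would serve equally well, the remainder of the argument being entirely general.
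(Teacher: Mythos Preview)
Your argument is correct and follows essentially the same strategy as the paper: use the Baire Category Theorem to find a non-empty open set $U$ contained in one of the witnesses, and then observe that two points of $U$ with distinct local topological types in $X$ cannot be exchanged by any self-homeomorphism of that witness (since $U$ is open in both $X$ and the witness, the local types agree). The only difference is the concrete example: the paper uses the de~Groot--Wille propeller space, where the distinguishing local invariant is whether arbitrarily small punctured neighborhoods can be taken connected, whereas you use a generalized Wa\.{z}ewski dendrite with the branch order as the invariant; both choices work for exactly the same reason.
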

\begin{proof}
Let $P$ be the ``propeller space'' constructed by de Groot and Wille in \cite[Section 2]{de_groot_wille}.\footnote{\,For our purposes, we could have chosen every propeller to be a two-bladed propeller.} Assume that $P=\bigcup_{n\in\omega}P_n$, where each $P_n\in\bS^0_2(P)$. We will show that some $P_n$ is not homogeneous. Pick closed subsets $P_{n,k}$ of $P$ for $(n,k)\in\omega\times\omega$ such that each $P_n=\bigcup_{k\in\omega}P_{n,k}$. Since $P$ is a Baire space, we can fix $(n,k)\in\omega\times\omega$ and a non-empty open subset $U$ of $P$ such that $U\subseteq P_{n,k}$. Using the same notation as \cite{de_groot_wille}, one key property of $P$ is that if $p\in P\setminus\{a'_i:1\leq i<\omega\}$, then for every neighborhood $V$ of $p$ in $P$ there exists a neighborhood $V'\subseteq V$ of $p$ in $P$ such that $V'\setminus\{p\}$ is connected. On the other hand, no $a'_i$ has this property. Now fix $p\in U\setminus\{a'_i:1\leq i<\omega\}$ and $a'_j\in U$. Since $U$ is also open in $P_n$, it is clear that there can be no homeomorphism $h:P_n\longrightarrow P_n$ such that $h(p)=a'_j$. This shows that $P_n$ is not homogeneous, as desired.
\end{proof}

\section{Finite unions}\label{section_finite}

The purpose of this section is to distinguish $\sigma$-homogeneity from its finite analogues, and distinguish these finite analogues among themselves (see Corollary \ref{corollary_scattered}). Given $n$ such that $1\leq n\leq\omega$, we will say that a space $X$ is \emph{$n$-homogeneous} if $X=\bigcup_{k\in n}X_k$, where each $X_k$ is a homogeneous subspace of $X$. Similarly, we will say that a space $X$ is \emph{$n$-discrete} if $X=\bigcup_{k\in n}X_k$, where each $X_k$ is a discrete subspace of $X$.

\begin{theorem}\label{theorem_scattered}
Let $1\leq n\leq\omega$, and let $X$ be a scattered space of Cantor-Bendixson rank $n$. Then $X$ is $n$-discrete but not $\ell$-discrete for any $\ell<n$.
\end{theorem}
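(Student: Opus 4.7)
My plan splits into two independent halves. To see that $X$ is $n$-discrete, I would take the partition into Cantor-Bendixson levels: set $D_k=X^{(k)}\setminus X^{(k+1)}$ for $k<n$. Since $X^{(n)}=\varnothing$, these sets cover $X$ (in the case $n=\omega$ this uses $X^{(\omega)}=\bigcap_{k<\omega}X^{(k)}=\varnothing$). Each $D_k$ is discrete in $X$: given $x\in D_k$, the point $x$ is isolated in $X^{(k)}$, so one can pick an open $U\subseteq X$ with $U\cap X^{(k)}=\{x\}$, and then $U\cap D_k\subseteq U\cap X^{(k)}=\{x\}$. This yields $n$-discreteness.

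For the lower bound, I would fix $\ell<n$ and argue by contradiction: suppose $X=\bigcup_{k<\ell}Y_k$ with each $Y_k$ discrete in $X$. Since the Cantor-Bendixson rank of $X$ is $n>\ell$, the set $X^{(\ell)}$ is non-empty, so I can pick some $x_0\in X^{(\ell)}$. The heart of the argument will be a recursion producing, for $i=0,1,\ldots,\ell$, points $x_i\in X^{(\ell-i)}$, indices $k_i<\ell$, and open neighborhoods $U_i\ni x_i$ with the invariants $x_i\in Y_{k_i}$, $U_i\cap Y_{k_i}=\{x_i\}$, $U_i\cap Y_{k_j}=\varnothing$ for every $j<i$, and $U_i\subseteq U_{i-1}$. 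The base case uses only the discreteness of $Y_{k_0}$ at $x_0$. At stage $i<\ell$, the fact that $x_i\in X^{(\ell-i)}=(X^{(\ell-i-1)})^{(1)}$ means $x_i$ is a limit point of $X^{(\ell-i-1)}$, so I can choose $x_{i+1}\in(U_i\cap X^{(\ell-i-1)})\setminus\{x_i\}$; the invariants on $U_i$ immediately force $x_{i+1}\notin Y_{k_j}$ for all $j\leq i$, producing a new index $k_{i+1}$. A neighborhood witnessing discreteness of $Y_{k_{i+1}}$ at $x_{i+1}$, intersected with $U_i$ and with $x_i$ removed (legitimate since $X$ is $T_1$), becomes $U_{i+1}$. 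After $\ell$ steps I obtain $\ell+1$ pairwise distinct indices $k_0,\ldots,k_\ell$ drawn from $\{0,\ldots,\ell-1\}$, which is absurd.

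The main obstacle will be the bookkeeping in this recursive step: the new open set $U_{i+1}$ must simultaneously witness the discreteness of $Y_{k_{i+1}}$ at $x_{i+1}$ and avoid all previously selected classes $Y_{k_0},\ldots,Y_{k_i}$. Staying inside $U_i$ handles $Y_{k_0},\ldots,Y_{k_{i-1}}$ automatically, while excising the single point $x_i$ via the $T_1$ axiom handles $Y_{k_i}$. The case $n=\omega$ needs no separate treatment: the $D_k$ for $k<\omega$ witness $\omega$-discreteness, and for every finite $\ell$ one has $X^{(\ell)}\neq\varnothing$ by minimality of $\omega$, so the above inductive argument applies verbatim.
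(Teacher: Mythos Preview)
Your argument is correct, and the first half (the $n$-discreteness via the Cantor--Bendixson levels) coincides with the paper's. For the lower bound, however, you take a genuinely different and considerably more elementary route. The paper builds an entire tree $f:\omega^{<n\uparrow}\to X$ of convergent sequences, colors the nodes by which $D_i$ they land in, and then invokes the classical Ramsey Theorem (iterated over the levels) to find two levels receiving the same color; a limit-point argument along the tree then contradicts discreteness. Your approach replaces this global tree-and-Ramsey machinery with a single descending path $x_0,x_1,\ldots,x_\ell$ through the Cantor--Bendixson levels, where the shrinking neighborhoods $U_i$ force each new point to land in a fresh $Y_{k_i}$, so that the Pigeonhole contradiction appears directly after $\ell+1$ steps. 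The bookkeeping you identify (intersecting with $U_i$ and excising the single point $x_i$ via $T_1$) is exactly what is needed, and since all spaces in the paper are metrizable this is unproblematic. Your argument is shorter and avoids Ramsey's theorem entirely; the paper's tree construction, while heavier, has the minor expository advantage of separating the combinatorics from the topology and treating the case $n=\omega$ by an explicit reduction to finite rank rather than folding it into the same induction.
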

\begin{proof}
Set $X_k=X^{(k)}\setminus X^{(k+1)}$ for $k<n$, and observe that $\{X_k:k\in n\}$ is a partition of $X$ consisting of discrete spaces. So it remains to show that $X$ is not $\ell$-discrete for any $\ell<n$. First assume that $n<\omega$.

Given $k\in\omega$, we will denote by $\omega^{<k\uparrow}$ (respectively $\omega^{k\uparrow}$) the collection of all strictly increasing sequences of elements of $\omega$ of length less than $k$ (respectively of length exactly $k$). When $k=0$, we declare $\langle s_0,\ldots,s_{k-1}\rangle=\{s_0,\ldots,s_{k-1}\}=\varnothing$ and $s_{-1}=-1$. By starting with an arbitrary $f(\varnothing)\in X_{n-1}$ and continuing in the obvious way, one can construct a function $f:\omega^{<n\uparrow}\longrightarrow X$ satisfying the following conditions:
\begin{enumerate}
\item $f[\omega^{k\uparrow}]\subseteq X_{n-k-1}$ for every $k<n$,
\item $\langle f(s^\frown i):s_{k-1}<i<\omega\rangle$ converges to $f(s)$ for all $s=\langle s_0,\ldots,s_{k-1}\rangle\in\omega^{<n\uparrow}$.
\end{enumerate}

Now assume, in order to get a contradiction, that $X$ is $\ell$-discrete for some $\ell<n$. Write $X=\bigcup_{i\in\ell}D_i$, where the $D_i$ are pairwise disjoint discrete subspaces of $X$. Define the coloring $c:[\omega]^{<n}\longrightarrow\ell$ by setting
$$
c(\{s_0,\ldots,s_{k-1}\})=i\text{ iff }f(\langle s_0,\ldots,s_{k-1}\rangle)\in D_i,
$$
where we are assuming that $s_0<\cdots <s_{k-1}$. By repeatedly applying the classical Ramsey Theorem (see for example \cite[Theorem 9.1]{jech}), one obtains sets $H_k$ for $k<n$ satisfying the following conditions:
\begin{itemize}
\item $\omega=H_0\supseteq\cdots\supseteq H_{n-1}$,
\item $|c[[H_k]^k]|=1$ for each $k$.
\end{itemize}
Denote by $i_k$ the unique element of $c[[H_k]^k]$ for $k<n$. By the Pigeonhole Principle, we can fix $i<\ell$ and $j,k<n$ such that $j<k$ and $i_j=i_k=i$. Also fix $s_0,\ldots,s_{j-1}\in H_k\subseteq H_j$ such that $s_0<\cdots <s_{j-1}$, and set $s=\langle s_0,\ldots,s_{j-1}\rangle$. It is clear that $f(s)\in D_i$. Now set
$$
S=\{f(s^\frown\langle s_j,\ldots ,s_{k-1}\rangle):s_j,\ldots ,s_{k-1}\in H_k\text{ and }s_{j-1}<s_j<\cdots <s_{k-1}\},
$$
and observe that $S\subseteq D_i$. Using condition (2), it is easy to realize that $f(s)$ belongs to the closure of $S$ in $X$. Furthermore, condition (1) guarantees that $f(s)\notin S$. This contradicts the assumption that $D_i$ is discrete.

Finally, assume that $n=\omega$. If $X$ were $\ell$-discrete for some $\ell<n$, then so would be $X\setminus X^{(\ell+1)}$. But this would contradict the first part of this proof, since $X\setminus X^{(\ell+1)}$ is a scattered space of Cantor-Bendixson rank $\ell+1$.
\end{proof}

\begin{corollary}\label{corollary_scattered}
Let $1\leq n\leq\omega$. Then there exists a zero-dimensional scattered space that is $n$-homogeneous but not $\ell$-homogeneous for any $\ell<n$.
\end{corollary}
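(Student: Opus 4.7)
The plan is to reduce the corollary to Theorem \ref{theorem_scattered} by observing that, in the scattered realm, $n$-homogeneity and $n$-discreteness coincide. For each $1\leq n\leq\omega$, one then exhibits an explicit zero-dimensional scattered space of Cantor-Bendixson rank $n$. Natural examples are the compact ordinal space $\omega^{n-1}+1$ for $1\leq n<\omega$, and the ordinal space $\omega^\omega$ (equivalently $[0,\omega^\omega)$) for $n=\omega$. These are countable (hence separable metrizable), zero-dimensional (the order topology on an ordinal has a clopen base of half-open intervals $(\alpha,\beta]$), and scattered; and a routine computation, using that the point $\omega^{k}$ has CB rank $k$ in any sufficiently large ordinal, shows that the CB rank is exactly $n$.

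The key observation is that every non-empty scattered space has an isolated point: otherwise $X^{(1)}=X$ would give, by transfinite induction, $X^{(\xi)}=X$ for all $\xi$, contradicting scatteredness. Homogeneity then spreads isolation to every point, so a homogeneous scattered space is discrete. Combined with the trivial fact that discrete spaces are homogeneous (every bijection is a homeomorphism), this means a scattered space $X$ is $\ell$-homogeneous \emph{if and only if} it is $\ell$-discrete: in any decomposition $X=\bigcup_{k<\ell}X_k$ into homogeneous subspaces, each $X_k$ inherits scatteredness from $X$ and is hence discrete, and the converse is immediate.

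Putting the pieces together: the chosen $X$ is $n$-discrete by Theorem \ref{theorem_scattered}, hence $n$-homogeneous; and were it $\ell$-homogeneous for some $\ell<n$, the equivalence above would make it $\ell$-discrete, contradicting Theorem \ref{theorem_scattered}. There is no real obstacle here—the only point requiring a small computation is the verification of the CB rank of the chosen ordinal examples, which is standard.
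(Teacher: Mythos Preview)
Your proposal is correct and follows essentially the same approach as the paper: exhibit an ordinal space of Cantor-Bendixson rank $n$ and invoke Theorem \ref{theorem_scattered} together with the observation that a scattered space is homogeneous iff it is discrete. The only cosmetic difference is the choice of example---the paper uses $\omega^n$ uniformly for all $1\leq n\leq\omega$, whereas you use $\omega^{n-1}+1$ for finite $n$ and $\omega^\omega$ for $n=\omega$---but both choices work equally well.
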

\begin{proof}
Consider the ordinal $\omega^n$ with the order topology. It is not hard to check that this space is scattered of Cantor-Bendixson rank $n$. The desired result then follows from Theorem \ref{theorem_scattered} plus the observation that a scattered space is homogeneous iff it is discrete.
\end{proof}

\section{Open questions}

The most pressing open question is the following. We remark that we would not know the answer even if we substituted ``analytic'' with much more restrictive conditions, such as ``compact''.

\begin{question}\label{question_analytic}
Is every analytic space $\sigma$-homogeneous?
\end{question}

Recall that a space is \emph{countable-dimensional} if it can be written as a countable union of zero-dimensional spaces. Notice that, by the $\mathsf{G}_\delta$-Enlargement Theorem of dimension theory (see for example \cite[Corollary 3.3.12]{van_mill_book}), these zero-dimensional subspaces can be chosen to be $\mathsf{G}_\delta$. Using this observation, it is straightforward to see that Theorem \ref{theorem_main_generalized} holds for all countable-dimensional spaces. The same is true of Theorems \ref{theorem_ostrovsky_generalized} and \ref{theorem_determinacy_generalized}, except that we do not know whether one can still obtain pairwise disjoint witnesses (without increasing their complexity beyond $\mathsf{G}_\delta$).

\begin{question}
Do Theorems \ref{theorem_ostrovsky_generalized} and \ref{theorem_determinacy_generalized} hold for all countable-dimensional spaces?
\end{question}

Given the results of Section \ref{section_disjoint}, it seems fitting to mention that the following question is still open (see \cite[Question 13.5]{medini_vidnyanszky}).

\begin{question}[Medini, Vidny\'anszky]
In $\ZFC$, is there a zero-dimensional $\sigma$-homogeneous space that is not $\sigma$-homogeneous with pairwise disjoint witnesses? At least under additional set-theoretic assumptions?
\end{question}

Finally, we remark that the $\sigma$-homogeneity of the zero-dimensional spaces considered in this article (as well as in \cite{medini_vidnyanszky} and \cite{ostrovsky_sigma}) can always be witnessed by \emph{strongly} homogeneous subspaces. This observation suggests the following question.

\begin{question}
In $\ZFC$, is there a zero-dimensional $\sigma$-homogeneous (or even homogeneous) space that is not $\sigma$-homogeneous with strongly homogeneous witnesses? At least under additional set-theoretic assumptions?
\end{question}

\end{document}